\numberwithin{equation}{section}
\newtheorem{theorem}{Theorem}
\newtheorem{lemma}[theorem]{Lemma}
\newtheorem{proposition}[theorem]{Proposition}
\newtheorem{definition}[theorem]{Definition}
\newtheorem{corollary}[theorem]{Corollary}
\numberwithin{theorem}{section}
\theoremstyle{definition}
\newtheorem{remark}[theorem]{Remark}
\newcommand\indlim\varinjlim
\def\cA{\mathcal{A}}
\def\cE{\mathcal{E}}
\def\cL{\mathcal{L}}
\def\cO{\mathcal{O}}
\def\cT{\mathcal{T}}
\newcommand\fm{{\mathfrak m}}
\newcommand\fS{{\mathfrak S}}
\newcommand\bbA{{\mathbb A}}
\newcommand\CC{{\mathbb C}}
\newcommand\PP{{\mathbb P}}
\newcommand\QQ{{\mathbb Q}}
\newcommand\RR{{\mathbb R}}
\def\bA{{\bbA}}
\def\bP{{\PP}}
\def\Z{\mathbf Z}
\def\Q{\mathbf Q}
\def\D{\mathbf D}
\def\N{\mathbf N}
\def\J{\mathrm J}
\def\M{\textsf M}
\def\c{{\mathrm c}}
\newcommand\GL{{\rm GL}}
\newcommand\tr{{\rm tr}}
\newcommand\pr{{\rm pr}}
\newcommand\id{{\rm id}}
\newcommand\Spec{{\rm Spec}}
\newcommand\Gm{\mathbb{G}_m}
\newcommand\Ga{{\mathbb{G}_a}}
\newcommand\val{{\rm val}}
\newcommand\Map{{\rm Map}}
\newcommand\Hom{{\rm Hom}}
\newcommand\End{{\rm End}}
\newcommand\Sym{{\rm Sym}}
\newcommand\Bun{{\rm Bun}}
\newcommand\Gr{{\rm Gr}}
\newcommand\ad{{\rm ad}}
\newcommand\Mat{{\rm Mat}}
\newcommand\Ql{{\QQ}_\ell}
\newcommand\Ind{{\rm Ind}}
\newcommand\cone{{\rm c}}
\newcommand\Prim{{\rm Prim}}
\newcommand\Spf{\mathrm{Spf}}
\newcommand\IC{\mathrm{IC}}
\newcommand{\punc}{*}
\title{On the formal arc space
of a reductive monoid}
\author{A.\ Bouthier, B.C.\ Ng\^o, Y.\ Sakellaridis}
\newcommand{\Addresses}{{ 
  \bigskip
  \footnotesize

  (A.\ Bouthier) \textsc{Einstein Institute of Mathematics, Hebrew University, Givat Ram, Jerusalem, 91904, Israel}\par\nopagebreak
  \textit{E-mail address}: \texttt{bouthier@math.huji.ac.il}

  \medskip

  (B.C.\ Ng\^o) \textsc{University of Chicago, 
Eckhart Hall, 5734 University Avenue,
Chicago, IL 60637,USA}\par\nopagebreak
  \textit{E-mail address}: \texttt{ngo@uchicago.edu}

  \medskip

  (Y.\ Sakellaridis) \textsc{Rutgers University - Newark, 101 Warren Street, Smith Hall 216, Newark, NJ 07102, USA, and Department of Mathematics, 
National Technical University of Athens,
Zografou 15780, Greece.}\par\nopagebreak
  \textit{E-mail address}: \texttt{sakellar@rutgers.edu}
}
}
\date{}
\begin{document}

\maketitle

\begin{abstract}
Let $X$ be a scheme of finite type over a finite field $k$, and let $\cL X$ denote its arc space; in particular, $\cL X(k) = X(k[[t]])$. Using the theory of Grinberg, Kazhdan, and Drinfeld on the finite-dimensionality of singularities of $\cL X$ in the neighborhood of non-degenerate arcs, we show that a canonical ``basic function'' can be defined on the non-degenerate locus of $\cL X(k)$, which corresponds to the trace of Frobenius on the stalks of the intersection complex of any finite-dimensional model. We then proceed to compute this function when $X$ is an affine toric variety or an ``$L$-monoid''. Our computation confirms the expectation that the basic function is a generating function for a local unramified $L$-function; in particular, in the case of an $L$-monoid we prove a conjecture formulated by the second-named author in \cite{Ngo-monoids}.
\end{abstract}

\begin{flushright}
\emph{Dedicated to the memory of Professor Igusa.}
\end{flushright}

\section*{Introduction}

The formal arc space $\cL X$ of an algebraic variety $X$ carries an important amount of information on singularities of $X$. Little is known about singularities of the formal arc scheme itself. According to Grinberg, Kazhdan \cite{GK} and Drinfeld \cite{Drinfeld} it is known, nevertheless, that the singularity of $\cL X$ at a non-degenerate arc is finite dimensional i.e.\ for every non-degenerate arc $x\in \cL X$, the formal completion of $\cL X$ at $x$ is isomorphic to $Y_y \times \D^\infty$ where $Y_y$ is the formal completion of a finite dimensional variety $Y$ at some point $y\in Y$ and $\D^\infty$ is the infinite power of the formal disc. 

One can hope to define the intersection complex of $\cL X$ via its local finite dimensional models and study the intersection complex as a measure of the singularity of $\cL X$. In this paper, we show that the trace of Frobenius function on the intersection complex is well defined on the space of non-degenerate arcs (to be defined in Section 1). The main result of this paper is the calculation of this function in the cases where $X$ is a toric variety or a special but important class of reductive monoids. 

The main motivation behind this calculation is an expectation that, at least when $X$ is an affine spherical variety under the action of a reductive group $G$, this function is, in a suitable sense, a generating series for an unramified local $L$-function (or product thereof). This expectation was stated in \cite{SaRS} in order to give a conceptual explanation to the Rankin-Selberg method, but the idea draws from the work of Braverman and Kazhdan who studied the Schwartz space of the basic affine space \cite{BKSchwartz}, and from relevant work in the geometric Langlands program \cite{BGEisenstein, BFGM}.

In the case when $X$ is in the class of reductive monoids that we term ``$L$-monoids'' (first introduced by Braverman and Kazhdan in \cite{BKgamma}), a precise conjecture was formulated in \cite{Ngo-monoids}. It states that this function, the trace of Frobenius on the intersection complex of the formal arc space, is the generating series of the local unramified $L$-function for the irreducible representation of the dual group whose highest weight determines the isomorphism class of the $L$-monoid. 
This generalizes the (local unramified) construction of Godement and Jacquet \cite{GJ} in the case $X=\Mat_n$. A proof of this conjecture is presented in the present paper.

In the case of affine toric varieties, the trace of Frobenius function on the intersection complex can be expressed as a generating series whose coefficients are the number of ways to decompose an element of the strictly convex cone of coweights defining the isomorphism class of the variety as a sum of its generators. In particular, these coefficients are natural numbers which are independent of the base field. This function can also be seen as the generating function for the product of local unramified $L$-functions of the torus determined by the generators in this coweight cone.

Since our method in the two cases are somewhat similar, one can hope to generalize the result to general spherical varieties.

\subsection*{Acknowledgments}
We thank D. Bourqui and J. Sebag for pointing out a mistake in the definition of finite dimensional formal model in an earlier version of our paper.

This work has been realized partly during the stay of the three authors at the MSRI in the fall 2014 and we would like to thank the Institute for its hospitality and the excellent working conditions.

The second named author has been supported by NSF grant 
DMS-1302819 and the Simons foundation. 
The third named author has been supported by NSF grant DMS-1101471.

\section{IC-function on the formal arc space} \label{section:IC-function}

Let $X$ be a scheme of finite type over a field $k$. For every positive integer $n$, we consider the $n$-arc functor $\J_n(X)$ whose $R$-points  are the $R[t]/t^{n+1}$-points of $X$. 
For $n=0$, $\J_0(X)=X$. For $n=1$, $\J_1(X)$ is the tangent bundle of $X$. 
If $X$ is affine, $\J_n(X)$ is representable by an affine scheme of finite type. It follows that in general, $\J_n(X)$ is representable as an affine $X$-scheme of finite type. For $m>n$, the truncation maps
$$p_n^m:\J_m(X) \to \J_n(X)$$
are thus affine. If $X$ is smooth, the maps $p_n^m$ are smooth and surjective for all $n>m$. 

The inverse limit $\cL X$ of $\J_nX$ is called the formal arc space. The set of its $R$-points is a projective limit
\begin{equation}
\cL X(R)= X(R[[t]]) =\varprojlim\limits_{n\to\infty} \J_n(R).
\end{equation}
For each integer $n$, we have a canonical map $p_n: \cL X \to \J_n(X)$. If $X$ is smooth, the maps $p_n$ are formally smooth and surjective. In the case where $X$ is not smooth, the geometry of $p_n$ is rather complicated. We refer \cite{EM} for a good account of this theory.

\begin{definition} 
A finite dimensional formal model of $\cL X$ at $x\in \cL X(k)$ is the formal completion $Y_y$ of a $k$-scheme of finite type $Y$ at a point $y\in Y$ equipped with an isomorphism of formal schemes
\begin{equation} \label{finite model}
(\cL X)_x \simeq Y_y \times \D^\infty
\end{equation}
where $\D$ is the formal disc.
\end{definition}

Let $X$ be an integral scheme over $k$. Let $X^\circ$ be a smooth dense open subset in $X$. Let us denote $Z$ the complement of $X^\circ$. 
We will denote $\cL^\circ X$ the space of arcs which generically map into $X^\circ$, i.e.\ for any test scheme $S$ we have $\cL^\circ X(S)=$ the set of maps $\phi: \D \times S \to X$ such that $\phi^{-1}(X^\circ)$ is an open $U\subset \D\times S$ surjecting to $S$. Such arcs will be called {\em non-degenerate} with respect to $X^\circ$. We have
\begin{equation} \label{circ}
\cL^\circ X(k)=\cL X(k) - \cL Z(k).
\end{equation}

According to Drinfeld \cite{Drinfeld} (and Grinberg, Kazhdan \cite{GK}), finite dimensional formal models exist for every point $x\in \cL^\circ X$.  

One should be able to use Drinfeld's theorem to define the notion of perverse sheaves over $\cL^\circ X$, and in particular the intersection complex of $\cL^\circ X$. In this paper, we will show a weaker statement: when $k$ is a finite field, one can define a canonical function on $\cL^\circ X(k)$ that has to be seen as the function of Frobenius trace on the sought-after intersection complex on $\cL X$. \footnote{Recently, after the completion of this paper, Bouthier and Kazhdan set up a foundation for a theory of perverse sheaves on the space of non-degenerate formal arcs, see \cite{BoK}.}

Because we are dealing with infinite dimensional schemes, we need to renormalize the cohomological shift in the construction of intersection complex. Let $X$ be a scheme of finite type over $k$. Let $U$ be a smooth open dense subscheme of $X$ with $U=\bigsqcup_i U_i$ where $U_i$ are the connected components of $U$. If $j_i:U_i \to X$ denotes the open embedding, then in the usual definition of \cite{BBD}, the intersection complex of $X$ would be the direct sum of $j_{i,!*}  \Ql[\dim(U_i)]$. In constrast with the usual definition, we set 
\begin{equation} \label{IC}
\IC_X=\bigoplus_i j_{i,!*}  \Ql.
\end{equation}
In our normalization, the restriction of $\IC_X$ to $U_i$ is the constant sheaf $\Ql$ placed on degree $0$ disregarding the dimension of $X$.

This naive normalization does not behave well with the Verdier duality, however it is more convenient in certain other aspects. If $p:Y\to X$ is a smooth morphism, then we have
\begin{equation}
\IC_Y= p^* \IC_X.
\end{equation}
Also, if $p:Y\to X$ is a finite morphism which is an isomorphism over a dense open subscheme of $X$, and if $Y$ is smooth, then we have
\begin{equation}
\IC_X=p_* {\Ql}_Y
\end{equation}
where ${\Ql}_{Y}$ is the constant sheaf of value $\Ql$ on $Y$. 

If $k$ is a finite field with $q$ elements, then the trace of the Frobenius operator on the stalk of the intersection complex of a scheme $X$ of finite type defines a function, to be denoted by the same symbol
\begin{equation}
\IC_X: X(k) \to \Ql.
\end{equation}
This function takes value $1$ on the $k$-points of smooth open subscheme of $X$. It can be regarded as a numerical invariant of singularity of $X$.

\begin{proposition} \label{IC-number}
Let $Y, Y'$ be $k$-schemes of finite type and $y\in Y(k), y'\in Y'(k)$ such that there exists an isomorphism of formal schemes
\begin{equation} \label{isom with infinitely many variables}
	Y_y \hat{\times} \D^\infty \simeq Y'_{y'}\hat{\times} \D^\infty.
\end{equation}
Then the equality 
\begin{equation}
	\IC_Y(y)=\IC_{Y'}(y')
\end{equation}
holds.
\end{proposition}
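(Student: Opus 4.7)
The plan is to reduce the infinite-dimensional formal statement to a finite one, and then to show that the IC-function is an invariant of the formal isomorphism class of a pointed variety. The argument breaks naturally into three ingredients.

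The first ingredient is a \emph{stabilization lemma}: any isomorphism
\[
Y_y\hat\times\D^\infty\simeq Y'_{y'}\hat\times\D^\infty
\]
of formal schemes descends, after trading a finite number of formal variables between the two sides, to a finite-variable isomorphism $Y_y\hat\times\D^N\simeq Y'_{y'}\hat\times\D^N$ for some integer $N\geq 0$. In terms of the completed tensor products $\hat\cO_{Y,y}\hat\otimes k[[t_1,t_2,\ldots]]$ and $\hat\cO_{Y',y'}\hat\otimes k[[s_1,s_2,\ldots]]$, noetherianity of $\hat\cO_{Y,y}$ and $\hat\cO_{Y',y'}$ ensures that the images of generators involve only finitely many of the other variables modulo each fixed power of the maximal ideal, and a diagonal construction produces the required finite model. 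I expect this step—for which a careful analysis is available in the work of Bourqui and Sebag—to be the main obstacle of the argument, because $k[[t_1,t_2,\ldots]]$ is not noetherian and the order of the limits involved must be handled delicately.

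The second ingredient is that the value $\IC_Y(y)$ depends only on the isomorphism class of the complete local $k$-algebra $\hat\cO_{Y,y}$. Applying Artin approximation, an isomorphism between formal completions at $k$-rational points of two finite-type schemes is approximated by a common étale neighborhood: there exists a pointed $k$-scheme $(Z,z)$ together with étale morphisms $(Z,z)\to (Y,y)$ and $(Z,z)\to (Y',y')$. Since étale morphisms are smooth and the normalization~\eqref{IC} makes $\IC$ pull back under smooth morphisms without a shift, we obtain $\IC_Y(y)=\IC_Z(z)=\IC_{Y'}(y')$.

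The third ingredient identifies the formal completion of $Y\times\bbA^N$ at $(y,0)$ with $Y_y\hat\times\D^N$; since the projection $Y\times\bbA^N\to Y$ is smooth, the pullback property of $\IC$ gives $\IC_{Y\times\bbA^N}(y,0)=\IC_Y(y)$, and similarly $\IC_{Y'\times\bbA^N}(y',0)=\IC_{Y'}(y')$. Combining the three ingredients, the first provides an isomorphism of pointed formal completions of $Y\times\bbA^N$ and $Y'\times\bbA^N$; the second, applied to these two finite-type schemes, yields $\IC_{Y\times\bbA^N}(y,0)=\IC_{Y'\times\bbA^N}(y',0)$; and the third converts this equality back into the desired identity $\IC_Y(y)=\IC_{Y'}(y')$.
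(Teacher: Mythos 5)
Your overall architecture --- reduce the infinite product of discs to a finite one, then invoke Artin approximation and smooth pullback of $\IC$ --- is exactly the paper's, and your second and third ingredients are carried out correctly. The gap is in the first ingredient, the stabilization lemma, which you rightly identify as the crux but do not actually prove. The difficulty is precisely that ``finitely many variables modulo each fixed power of the maximal ideal'' does not assemble into a single finite-variable isomorphism: the set of variables $v_j$ appearing in the image of a coordinate of $Y$ modulo $\mathfrak m^h$ can grow without bound as $h\to\infty$, and a ``diagonal construction'' over $h$ has no candidate limit, because the truncations at different levels are not coherent with one another. Noetherianity of $\hat\cO_{Y,y}$ does not help here; the relevant completed ring $\hat\cO_{Y,y}\hat\otimes k[[v_1,v_2,\ldots]]$ is not noetherian, and a single element of it genuinely can involve all of the $v_j$.

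The paper's resolution is more delicate. One controls only the \emph{linear} terms: choose $m'$ so that the images of the coordinates $x_i$ of $Y$, taken modulo the square of the maximal ideal, involve only $v_1,\ldots,v_{m'}$, and define $\phi'$ by brutally setting $v_j=0$ for $j>m'$. One then checks that the modified map $\phi'_+$ (equal to $\phi'$ on the $x_i$ and to the original isomorphism $\phi_+$ on the auxiliary variables) induces the same map as $\phi_+$ on every graded piece of the $\mathfrak m$-adic filtration --- because the associated graded ring is generated in degree one --- hence is itself an isomorphism; this shows that $\phi'\colon Y'_{y'}\times\D^{m'}\to Y_y$ is formally smooth. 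Finally, Schlessinger's theorem is needed to convert this formal smoothness into a product decomposition $Y'_{y'}\times\D^{m'}\simeq Y_y\times\D^{m}$ (with possibly different numbers of discs on the two sides, which is all your ingredients two and three require). Until you supply an argument of this kind, the reduction to finite dimensions remains an assertion rather than a proof.
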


\begin{proof}
It will be enough to prove that there exists a formally smooth morphism  
\begin{equation} \label{formally smooth with finitely many variables}
Y'_{y'} \times \D^{m'} \to Y_y.
\end{equation}  
If we assume \eqref{formally smooth with finitely many variables} exists, then according to \cite[Prop. 2.5(i)]{Schlessinger} there exists an integer $m$ such that \eqref{formally smooth with finitely many variables} factors through an isomorphism 
\begin{equation}
Y'_{y'} \times \D^{m'} \simeq Y_y \times \D^m.
\label{artin}
\end{equation}
It follows from Artin's approximation \cite[Cor.2.6]{Artin} that if two pointed $k$-schemes $(X,x)$ and $(Y,y)$ of finite type have isomorphic formal neighborhoods, then there exist \'etale neighborhoods $(U,x)$ of $x$ in $X$ and $(V,y)$ of $y$ in $Y$ such that $(U,x)$ and $(V,y)$ are isomorphic. (One should be aware that this isomorphism is not canonical, and in particular the isomorphism it induces between formal completions may not coincide with the given one, but can be congruent to it to an arbitratry high order.) In the situation \eqref{artin}, we infer that $Y'\times \bA^{m'}$ and $Y\times \bA^m$ have isomorphic \'etale neighborhoods at $(y',0)$ and $(y,0)$ respectively. The stalks of the intersection complexes of $Y$ and $Y'$ at $y$ and $y'$ are therefore isomorphic, and in particular, they have the same trace under the Frobenius operator. 

We now prove the existence of \eqref{formally smooth with finitely many variables}. 
 
The map is in fact easy to describe: it will be the bottom map in the diagram \begin{equation} \label{composition}
\begin{tikzcd}
Y_{y'} \hat\times \D^\infty \arrow{r}{\phi_+} 
& Y_y \hat\times \D^\infty \arrow{d}{} \\
Y_{y'} \times \D^{m'} \arrow{u}[swap]{} \arrow{r}[swap]{\phi'}
& Y_y
\end{tikzcd}
\end{equation}
where the left vertical map is induced from an embedding of a finite-dimensional formal disk into the infinite formal disk, the upper horizontal map is \eqref{isom with infinitely many variables}, and the right vertical map is the canonical projection. What we need to prove is that for $m'$ large enough, $\phi'$ is formally smooth.

 We can assume that $Y$ is a closed subscheme of the affine space $\bA^n$ with coordinates $x_1,\ldots,x_n$ defined by the equations $f_1,\ldots,f_r \in k[x_1,\ldots,x_n]$. In other words, we have $Y=\Spec(A)$ with
\begin{equation*}
	A=k[x_1,\ldots,x_n]/(f_1,\ldots,f_r).
\end{equation*}
We will also also that $y$ correspond to the maximal ideal $\fm_A$ of $A$ generated by $x_1,\ldots,x_n$. We denote $\overline A$ the completion of $A$ with respect to $\fm_A$.

We consider the polynomial $A$-algebra $A_+=A[u_1,u_2,\ldots]$ where the variables $u_1,u_2,\ldots$ are countably infinite in number. Let $\fm_{A_+}$ be the maximal ideal of $A_+$ generated by $x_1,\ldots,x_n$ and $u_1,u_2,\ldots$, and let $\overline{A_+}$ denote the completion of $A_+$ with respect to $\fm_{A_+}$:
\begin{equation*}
	\overline{A_+}=\projlim_n A_+/\fm_{A_+}^{h}.
\end{equation*} 
We have
\begin{equation*}
	Y_y\hat\times\D^\infty =\Spf(\overline {A_+}).
\end{equation*}

If we denote $\overline {\fm_{A_+}^{h}}$ the kernel of the homomorphism 
$\overline{A_+} \to A_+/\fm_{A_+}^{h}$, then we have an isomorphism 
\begin{equation}
	A_+/\fm_{A_+}^{h} \to \overline{A_+}/\overline{\fm_{A_+}^{h}}.
\end{equation}
This induces for every $h$, an isomorphism 
\begin{equation} \label{graded}
	\fm_{A_+}^{h}/\fm_{A_+}^{h+1} \to \overline{\fm_{A_+}^{h}}/\overline{\fm_{A_+}^{h+1}}.
\end{equation}
In particular, for $h=1$, $\overline{\fm_{A_+}^{1}}/\overline{\fm_{A_+}^{2}}$ is an infinite dimensional vector space generated by the images of $x_1,\ldots,x_n$ and $u_1,u_2,\ldots$, in other words every element of $\overline{\fm_{A_+}^{1}}$ is congruent modulo $\overline{\fm_{A_+}^{2}}$ to a finite linear combination of  $x_1,\ldots,x_n$ and $u_1,u_2,\ldots$.

Similarly, $Y'=\Spec(B)$ with 
\begin{equation*}
	B=k[y_1,\ldots, y_{n'}]/(g_1,\ldots,g_{r'}).
\end{equation*}
and $y\in Y(k)$ corresponds to the maximal ideal $\fm_B$ generated by $y_1,\ldots,y_{n'}$. We denote $\overline B$ the completion of $B$ with respect to $\fm_B$.

We also denote by $B_+$ the polynomial algebra $B[v_1,v_2,\ldots]$ in countably many variables $v_1,v_2,\ldots$, $\fm_{B_+}$ the maximal ideal generated by $y_1,\ldots,y_{n'}$ and $v_1,v_2,\ldots$, and $\overline{B_+}$ the completion of $B_+$ with respect to $\fm_{B_+}$. The complete ring $\overline{B_+}$ is also filtered by the ideals $\overline {\fm_{B_+}^h}$ with
\begin{equation*}
	\overline {\fm_{B_+}^h}/\overline {\fm_{B_+}^{h+1}} \simeq
	{\fm_{B_+}^h}/{\fm_{B_+}^{h+1}}.
\end{equation*}
We have
\begin{equation*}
	Y'_{y'}\hat\times \D^\infty = \mathrm{Spf}(\overline{B_+}).
\end{equation*}
 
The isomorphism \eqref{isom with infinitely many variables} induces an isomorphism $\phi_+:\overline{A_+} \to \overline{B_+}$. Let us denote $\phi_i=\phi(x_i)$ for all $i=1,\ldots,n$. There exists an integer $m'$ such that the images of $\phi_i$ in $\fm_{B_+}/\fm_{B_+}^2$ are linear combinations of images of $y_1,\ldots,y_{n'}$ and $v_1,v_2,\ldots,v_{m'}$. For this integer $m'$, we claim that the induced morphism $\phi'$ in diagram \eqref{composition} is formally smooth.

In algebra, the morphism $Y'_{y'}\times \D^{m'} \to Y_y$ corresponds to the homomorphism
\begin{equation}
	\phi':\overline A\to \overline{B}[[v_1,v_2,\ldots,v_{m'}]]
\end{equation}
given by $x_i \mapsto \phi'(x_i)$ where $\phi'(x_i)$ is the formal series obtained from $\phi_+(x_i)$ by setting $v_j=0$ for all $j>m'$. 
In order to prove that $\phi'$ is formally smooth, it is enough to construct another isomorphism 
\begin{equation} \label{new isom with infinitely many variables}
	\phi'_+:Y_{y'} \hat\times \D^\infty \to Y_y \hat\times \D^\infty,
\end{equation}
such that the diagram 
\begin{equation}
\begin{tikzcd}
Y_{y'} \hat\times \D^\infty \arrow{r}{\phi'_+} \arrow{d}[swap]{}
& Y_y \hat\times \D^\infty \arrow{d}{} \\
Y_{y'} \times \D^{m'} \arrow{r}[swap]{\phi'}
& Y_y
\end{tikzcd}
\end{equation}
in which vertical maps are canonical projections, is commutative. Indeed, the formal smoothness of $\phi'$ would then follow from the formal smoothness of the three other maps in this diagram.

In algebra, $\phi'_+:\overline{A_+} \to \overline{B_+}$ is given by ${\phi'}_+(x_i)=\phi'(x_i)$ and ${\phi'}_+(u_j)=\phi_+(u_j)$. It remains to prove that ${\phi'}_+:\overline{A_+} \to \overline{B_+}$ is an isomorphism of complete algebras.
By construction we have
\begin{equation} \label{congruence}
	\phi_+(x_i)\equiv \phi'_+(x_i) \mod \overline{\fm_{B_+}^2}.
\end{equation}
We observe that $\phi'_+(\fm_{A_+})\subset \overline{\fm_{B_+}}$. It follows $$\phi'_+(\overline{\fm_{A_+}^h})\subset \overline{\fm_{B_+}^h}$$ for all $h$. It is now enough to prove that the induced morphism 
\begin{equation}
\mathrm{gr}_h({\phi'}^+):	\overline{\fm_{A_+}^h} / \overline{\fm_{A_+}^{h+1}} \to 
\overline{\fm_{B_+}^h} / \overline{\fm_{B_+}^{h+1}}
\end{equation}
is an isomorphism for all $h\in\N$. 

Because of \eqref{graded} and \eqref{congruence}, $\phi^+$ and ${\phi'}^+$ induce the same map on graded pieces; in other words, the equality 
$$\mathrm{gr}_h({\phi'}^+)=\mathrm{gr}_h({\phi}^+)$$ 
holds for every $h\in\N$. Now since $\phi_+:A_+\to B_+$ is an isomorphism, $\mathrm{gr}_h({\phi'}^+)$ and $\mathrm{gr}_h({\phi}^+)$ are isomorphisms between graded pieces, and thus  ${\phi'}^+:A_+ \to B_+$ is also an isomorphism.
\end{proof}

It follows from this proposition that we have a well defined function on the set of of non-degenerate arcs
\begin{equation}
\IC_{\cL X}: \cL^\circ X(k) \to \Ql.
\end{equation}

\section{Global model for the formal arc space of a group embedding}
In the case of group embeddings, one can construct a finite-dimensional formal model at points of the formal arc space by constructing a moduli problem for bundles over a smooth projective curve with additional data. Let $X$ be an affine normal integral variety over a field $k$ equipped with an open embedding of a reductive group $G\hookrightarrow X$ and an action of $G\times G$ which extends the action on $G$ by left and right multiplication. This action automatically extends to $X$, making it into a monoid. For the purposes of defining the space $\cL^\circ X$ according to the previous section, we take $X^\circ$ to be the image of $G$ in $X$.

We will consider the algebraic stack $[G\backslash X/G]$ whose value on each test scheme $S$ is the groupoid of pairs of (left) principal $G$-bundles $E$, $E'$ on $S$ equipped with a $G$-equivariant morphism: $\phi:S\to X\wedge^{G\times G} (E\times E')$, where by $\wedge^{G\times G}$ we denote the quotient of the product by the diagonal action of $G\times G$.
 
Such a section $\phi$ will be called an $X$-morphism from $E$ to $E'$. Since $X$ is equipped with the structure of a monoid, $X$-morphisms between $G$-bundles can be composed.

Let $C$ be a smooth projective geometrically connected curve over $k$. We fix a principal $G$-bundle $E_0$ of $G$ over $C$, which will serve as our $G$-bundle of reference. We consider the the stack $\Map(C,[G\backslash X/G])$ of all maps from $C$ to the quotient stack $[G\backslash X/G]$; according to \cite{LM}, $\Map(C,[G\backslash X/G])$ is an algebraic stack locally of finite type. Over each test scheme $S$, an object of $\Map(C,[G\backslash X/G])$ is a map $\phi:C\times S \to [G\backslash X/G]$, in other words a $X$-morphism $\phi:E\to E'$ between two principal $G$-bundles $E$ and $E'$ over $C\times S$. Such map is said to be {\em non-degenerate} if $\phi^{-1}([G\backslash G/G])$ is an open subset of $C\times S$ whose projection on $S$ is surjective.
We will denote $\Map^\circ(C,[G\backslash X/G])$ the open substack of $\Map(C,[G\backslash X/G])$ consisting of non-degenerate maps only.

The stack $\Map(C,[G\backslash X/G])$ comes equipped with two maps to the moduli stack $\Bun_G$ of principal $G$-bundles on $C$: the ``left'' and the ``right'' one. We denote by $\Map(C,[G\backslash X/G])_0$ the fiber of the left one over $E_0$ (i.e.\ the base change with respect to the map $\operatorname{pt}\to \Bun_G$ defined by $E_0$). When $E_0$ is trivial, this is just the stack $\Map(C,[X/G])$. Except when the contrary is expressly mentioned, {\em $E_0$ will be the trivial $G$-bundle}.
We will consider the open substack 
\begin{equation}
M=\Map^\circ(C,[G\backslash X/G])_0
\end{equation}
of non-degenerate $X$-morphisms $E_0\to E$ between principal $G$-bundles over $X$. In later sections, we will show that, in some cases of interest, $M$ is an algebraic space locally of finite type.

We assume that $C(k)\ne \emptyset$, and fix once and for all a $k$-point $v\in C(k)$, an identification of its formal neighborhood $C_v$ with the formal disk $\D$. 
 
We will denote $C-\{v\}$ by $C'$.
We consider the stack $\tilde M$ classifying pairs $(\phi, \xi)$ where $\phi$ is a point of $M$ corresponding to an $G$-torsor $E$ on $C$ and $\xi$ is a trivialization of the restriction of $E$ to the formal completion $C_v$. Points of $\tilde M$ over a test scheme $S$ consist in a principal $G$-bundle $E$ over $C\times S$, a morphism $\phi:E_0\times S \to X\wedge^G E$ which induces an isomorphism between $E_0\times U$ and $E|_U$ over an open $U\times C\times S$ surjecting to $S$, and a trivialization of the underlying $G$-bundle $E$ over $C_S$ on the formal completion $(C_S)_v$ of $C_S$ along $\{v\}\times S$. We have the canonical projection
\begin{equation} \label{tilde-M-M}
\pi:\tilde M \to M
\end{equation}
which is a torsor under the group $\cL G$. 

Restricted to $(C_S)_v\simeq \D\hat\times S$, and taking into account the fixed trivialization of $E_0$ over $\D$, $(\phi,\xi)$ induces a morphism: $(C_S)_v \to X$ such that the preimage of $X^\circ=G$ is an open subset $U \subset (C_S)_v$ whose projection on $S$ is surjective. Thus we have a morphism 
\begin{equation} \label{global-to-local}
h:\tilde M \to \cL^\circ X.
\end{equation}

This morphism is not formally smooth, because of singularities introduced when a map $\phi$ as above has image in the singular locus of $[X/G]$ at a point $v'\ne v$. For our purposes, though, we only need to look at the formal neighborhood of a point $(\phi,\xi)\in \tilde M$ such that the only singularity of $\phi$ is at $v$.

\begin{proposition} \label{formallysmooth}
Let $\tilde m=(\phi,\xi)\in \tilde M(k)$ have the property that over $C'$ the map $\phi$ has image in $X^\bullet$, where $X^\bullet$ denotes the smooth locus of $X$. Then the morphism of formal neighbourhoods induced by \eqref{global-to-local}:
$$\tilde M_{\tilde m} \to \cL^\circ X_{x},$$
where $x\in \cL^\circ X(k)$ is the image of $\tilde m$ under $h$, 
is formally smooth.
\end{proposition}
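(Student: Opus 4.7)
The plan is to verify formal smoothness via the infinitesimal lifting criterion, reducing to a deformation-theoretic question by Beauville-Laszlo. Let $R \twoheadrightarrow R_0$ be a square-zero extension of Artinian local $k$-algebras. Given a deformation $\tilde m_{R_0} = (E_{R_0}, \phi_{R_0}, \xi_{R_0})$ of $\tilde m$ over $R_0$ together with a deformation $x_R \in \cL^\circ X(R)$ of $x = h(\tilde m)$ extending $h(\tilde m_{R_0})$, the task is to produce a lift $\tilde m_R$ over $R$ with $h(\tilde m_R) = x_R$.

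The central step is a Beauville-Laszlo decomposition. In families over $\Spec R$, giving $(E,\phi,\xi)$ as above is equivalent to giving: (i) a pair $(E', \phi')$ on $C'_R = (C \setminus \{v\}) \times \Spec R$ with $\phi' \colon E_0|_{C'_R} \to E'$ an $X$-morphism; (ii) an arc $\tilde x_R \in \cL X(R)$, recording the restriction of $\phi$ through $\xi$ to the formal disk $\D_R$; and (iii) a trivialization $\tau_R$ of $E'|_{\D^*_R}$ such that $\phi'|_{\D^*_R}$ read via $\tau_R$ agrees with $\tilde x_R|_{\D^*_R}$ as loops in $G$ (both landing in $G$ by non-degeneracy). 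Under this identification, $h$ is simply the projection to (ii), so $\tilde M$ fits into a Cartesian square
\[
\begin{tikzcd}
\tilde M \arrow{r} \arrow{d}{h} & \cN \arrow{d}{q} \\
\cL^\circ X \arrow{r}{\mathrm{res}} & \cL^* G
\end{tikzcd}
\]
in a neighborhood of $\tilde m$, where $\cN$ classifies triples $(E', \phi', \tau)$ on $C'$ with $\phi'$ landing in $X^\bullet$ and $\tau$ a trivialization on $\D^*$, and $q$ reads off the loop $\phi'|_{\D^*}$ through $\tau$. By base change, formal smoothness of $h$ reduces to formal smoothness of $q$ at the image of $\tilde m$.

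For the latter, the essential input is that $\phi'|_{C'}$ factors through the smooth locus $X^\bullet$, so that the associated classifying map $C' \to [G \backslash X/G]$ lands in the smooth open substack $[G\backslash X^\bullet /G]$. Since $C'$ is a smooth affine curve, higher cohomology of coherent sheaves on it vanishes; hence the deformation theory of $(E', \phi')$ is unobstructed, and the trivialization $\tau$ on the affine scheme $\D^*_R$ also deforms freely. Given any lift to $R$ of $q(E'_{R_0}, \phi'_{R_0}, \tau_{R_0}) \in \cL^* G(R_0)$, one first lifts $(E'_{R_0}, \phi'_{R_0})$ arbitrarily to $C'_R$ using obstruction vanishing, and then adjusts $\tau_R$ on $\D^*_R$ to match the prescribed loop, the freedom in $\tau_R$ being precisely a torsor over infinitesimal loop group elements. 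The main technical delicacy I expect is in properly formulating the Beauville-Laszlo equivalence for $G$-bundles and $X$-morphisms in Artinian families, and in verifying that non-degeneracy propagates so that restriction to $\D^*_R$ genuinely yields a point of $\cL^* G(R)$; once that setup is in place, the formal smoothness is a consequence of the affineness of $C'$ together with the smoothness of $X^\bullet$.
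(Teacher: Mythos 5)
Your proposal follows essentially the same route as the paper: the infinitesimal lifting criterion, a Beauville--Laszlo (Heinloth) decomposition of a point of $\tilde M$ over an Artinian base into the datum on $C'$, the arc on the formal disc, and a gluing trivialization on the punctured formal disc, followed by lifting the $C'$-datum using smoothness of $X^\bullet$ and adjusting the trivialization to match the prescribed arc.

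There is, however, one genuine gap, and it sits exactly at the point you set aside as a ``technical delicacy.'' Your parenthetical claim that $\phi'|_{\D^*_R}$ read via $\tau_R$ and $x_R|_{\D^*_R}$ are ``both landing in $G$ by non-degeneracy'' is not a formality and is not true for a general test scheme: non-degeneracy only says the locus where the section lands in $G$ is open and surjects onto the base, so a priori the two objects to be compared on the punctured formal disc are sections of $X$, not loops in $G$, and the ``adjustment'' of $\tau_R$ by an element of $G(R((t)))$ has no meaning until one knows they factor through $G$. This is the crux of the paper's argument, and it uses the Artinian local hypothesis in an essential way: since $R$ is Artinian local, the underlying topological space of the punctured formal disc $(C_S)_v^{\punc}$ is a single point (its reduction is $\Spec k((t))$), so a section of $X$ whose image is not contained in $X\setminus G$ lands entirely in $G$. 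Once that is established, the correcting element $\alpha\in G(R((t)))$ with $\bar\alpha=1$ exists uniquely and the lift is produced. You should supply this argument explicitly; with it, your proof is complete and coincides with the paper's.
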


\begin{proof}
The formal smoothness of formal neighbourhoods can be proved by the lifting property of points with values in local artinian rings. Let $(R,\mathfrak m)$ be a local artinian $k$-algebra and $\bar R=R/I$ where $I$ is an ideal of $R$ with $I^2=0$. We will denote $S=\Spec R$ and $\bar S=\Spec \bar R$. Let $(\bar\phi,\bar\xi)$ denote an $\bar R$-point of $\tilde M$ whose reduction modulo $\mathfrak m/I$ is equal to $\tilde m$. (Hence, necessarily, $R/\mathfrak m=k$.) Denote its 
image in $\cL^\circ X$ by $\bar \phi_v$ -- its reduction modulo $\mathfrak m$ is equal to $x$. Let $\phi_v$ be an $R$-point of $\cL^\circ X$ lifting $\bar\phi_v$. The morphism \eqref{global-to-local} is formally smooth if and only if in each situation as above, there exists an $R$-point $(\phi,\xi)$ of $M$, which lifts $(\bar \phi,\bar \xi)$ and maps to $\phi_v$. 

According to a variant of the Beauville-Laszlo formal patching theorem due to Heinloth \cite{Heinloth}, the $\bar R$-point 
$(\bar\phi,\bar\xi)$ of $\tilde M$ corresponds to the following collection of data:
\begin{itemize}
\item a formal arc $\bar\phi_v:(C_{\bar S})_v \to X$ which is non-degenerate with respect to $G=X^\circ$;
\item a morphism $\bar\phi': C'_{\bar S} \to [X/G]$ inducing a principal $G$-bundle $\bar E'$ on $C' \times \bar S$; notice that $\bar\phi'$ will necessarily factor through $[X^\bullet/G]$, since this is the case for its reduction modulo $\mathfrak m/I$;
\item a trivialization $\bar \beta$ of $\bar E'$ over the "punctured formal disc" $(C_{\bar S})_v^\punc$ such that the equality $\bar \beta^*(\bar \phi')=\bar\phi_v$ holds over $(C_{\bar S})_v^\punc$.
\end{itemize}
Here the ``punctured formal disc'' $(C_{\bar S})_v^\punc$ is defined to be the cartesian product of $(C_{\bar S})_v$ and $C'_{\bar S}$ over $C_{\bar S}$. The meaning of the equality $\bar \beta^*(\bar \phi')=\bar\phi_v$ also needs to be unraveled: $\phi'$ is given as a section of $[X^/G]$ giving rise to a principal $G$-bundle $E'$, which is trivialized over $(C_{\bar S})_v^\punc$, thus $\phi'$ defines via this trivialization a section $\bar \beta^*(\bar \phi')$ of $X^\bullet$ over $(C_{\bar S})_v^\punc$, to be compared with the restriction of $\bar\phi_v$.

Similarly, the $R$-point $(\phi,\xi)$ of $\tilde M$ is equivalent to the following collection of data
\begin{itemize}
\item a formal arc $\phi_v:(C_S)_v \to X$ which is non-degenerate with respect to $X^\circ$;
\item a morphism $\phi': C'_S \to [X/G]$ inducing a principal $G$-bundle $E'$ on $C' \times S$; as above, $\phi'$ will actually factor through $[X^\bullet/G]$;
\item a trivialization $\beta$ of $E'$ over the "punctured formal disc" $(C_S)_v^\punc$, such that the equality $\beta(\phi')=\phi_v$ holds over $(C_S)_v^\punc$. 
\end{itemize}

Among the above list of data, the first item $\phi_v$ is given. We will need to construct $\phi'$ and $\beta$ lifting $\bar\phi'$ and $\bar\beta$ respectively. First, as $X^\bullet$ is smooth, the map $\bar\phi': C'_{\bar S} \to [X^\bullet/G]$ can be lifted to a map $\phi': C'_S \to [X^\bullet/G]$. This map gives rise to a principal $G$-bundle $E'$ on $C'_S$. 

Consider the restriction $E'_v$ of $E'$ to the punctured formal disc $(C_S)_v^\punc$. Its reduction to $\bar R$, denoted by $\bar E'_v$, is trivialized by $\bar\beta$. By smoothness of $G$-bundle, there exists a trivialization $\beta'$ of the $G$-bundle $E'_v$ over $(C_S)_v^\punc$, extending $\bar\beta$. 

A priori, the restrictions of $(\beta')^*(\phi')$ and $\phi_v$ to $(C_S)_v^\punc$ define two different sections of $X$ which however  coincide on $(C_{\bar S})_v^\punc$. We will need to correct the trivialization so that the equality $\beta(\phi')=\phi_v$ occurs over $(C_S)_v^\punc$. This amounts to the existence of $\alpha\in G(R((t)))$ such that 
\begin{equation} \label{alpha-corrector}
\beta(\phi')=\alpha(\phi_v)
\end{equation}
which moreover maps to identity in $G(\bar R((t)))$.

It is enough to prove that the restrictions of $(\beta')^*(\phi')$ and $\phi_v$ to $(C_S)_v^\punc$ define sections $(C_S)_v^\punc \to X$, which factor through $G$. Indeed, in that case the element $\alpha \in G(R((t)))$ satisfying \eqref{alpha-corrector} would exist uniquely. This would prove the existence of $\alpha \in G(R((t)))$ satisfying \eqref{alpha-corrector} and the equality $\bar\alpha=1$ in $G(\bar R((t)))$ simultaneously. 

In order to prove that these maps factor through $G$, we use the crucial assumption that $R$ is an artinian local ring. Under this assumption, the underlying topological space of $(C_S)_v^\punc$ has just one point since its reduced scheme is $\Spec k((t))$.  
As the image of these maps in $X$ is not contained in $X-G$, being just a point, it is entirely contained in $G$. \end{proof}

Suppose $k$ is a finite field. Let $m\in M(k)$ and $x\in \cL^\circ X(k)$ such that there exists $\tilde m\in \tilde M(k)$ satisfying the assumptions of Proposition \ref{formallysmooth} and such that $\pi(\tilde m)=m$ and $h(\tilde m)=x$ where $\pi:\tilde M\to M$ is the map \eqref{tilde-M-M} and $h:\tilde M\to \cL X$ is the map \eqref{global-to-local}. Since $\tilde M_{\tilde m} \to (\cL X)_x$ is formally smooth, and $M$ is an algebraic space locally of finite type in the cases of interest, $M_m$ is a finite dimensional formal model of $(\cL X)_x$. According to Proposition \ref{IC-number}, there is an equality of numbers 
\begin{equation} \label{global-local}
\IC_M(m)=\IC_{\cL X}(x).
\end{equation}  
Finally, we prove that we can always find $m$ and $\tilde m$ as in the previous proposition.

\begin{proposition} \label{existence-of-global-point}
For any $x\in \cL^\circ X(k)$, there exists $\tilde m\in \tilde M(k)$ satisfying the assumptions of Proposition \ref{formallysmooth},  with $h(\tilde m) = x$. 
\end{proposition}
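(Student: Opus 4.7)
The plan is to build $\tilde m$ by hand, using the Beauville--Laszlo-type patching description invoked in the proof of Proposition~\ref{formallysmooth}. That description identifies a $k$-point of $\tilde M$ with a triple $(\phi_v, \phi', \beta)$ consisting of a non-degenerate formal arc $\phi_v \colon \D \to X$ at $v$, a morphism $\phi'\colon C' \to [X/G]$ giving a $G$-bundle $E'$ on $C'$, and a trivialization $\beta$ of $E'$ over the punctured formal disc $C_v^\punc$ satisfying $\beta^*(\phi') = \phi_v|_{C_v^\punc}$.

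Given the input arc $x \in \cL^\circ X(k)$, I first take $\phi_v := x$. Next I choose $E'$ to be the \emph{trivial} $G$-bundle on $C'$, and let $\phi'$ be the constant map to the identity $e \in G \subset X^\bullet$. Finally, for $\beta$ I take the element of $G(k((t)))$ obtained by restricting $x$ to $C_v^\punc$: this is well-defined precisely because $x$ is non-degenerate, so its restriction to $\Spec k((t))$ lands in $X^\circ = G$. The required compatibility then holds tautologically, namely $\beta^*(\phi') = \beta \cdot e = x|_{C_v^\punc} = \phi_v|_{C_v^\punc}$.

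By the patching statement, the three pieces of data glue to a point $\tilde m = (\phi,\xi) \in \tilde M(k)$, where the resulting $G$-bundle $E$ on $C$ is the one obtained from the trivial bundles on $C'$ and on $C_v$ by the transition function $\beta \in G(k((t)))$, and $\xi$ is the chosen trivialization over $C_v$. Under this trivialization, the formal arc $h(\tilde m)$ reads off as $\phi_v = x$, so $h(\tilde m) = x$ as required. Moreover, $\phi|_{C'}$ is the constant map to $e \in G \subset X^\bullet$, so the hypothesis of Proposition~\ref{formallysmooth} (image in $X^\bullet$ over $C'$) is satisfied.

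There is no serious obstacle here: the content of the proposition is that the local-to-global extension can always be achieved by the most naive choice, namely trivial bundle and constant identity section on $C'$. The only point that requires care is the non-degeneracy of $x$, used to guarantee that the restriction of $x$ to the punctured disc is a genuine element of $G(k((t)))$ and hence defines a valid transition function for the $G$-bundle $E$.
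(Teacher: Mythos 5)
Your proof is correct and follows essentially the same route as the paper: both glue the given arc $x$ at $v$ (viewed, via non-degeneracy, as a transition function in $G(k((t)))$) with the trivial $G$-bundle and identity section on $C'$ by Beauville--Laszlo patching, the paper phrasing this through the affine Grassmannian $G(F)/G(\cO)$ and you through the triple description $(\phi_v,\phi',\beta)$ from the proof of Proposition \ref{formallysmooth}.
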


\begin{proof}
Since $\cL^\circ X(k)= X(\cO) \cap G(F)$, an element $x\in \cL^\circ X(k)$ defines a $k$-point in the affine Grassmannian $G(F)/G(\cO)$. In other words, $x$ gives rise to a $G$-bundle $\cE$ on the formal disc $C_v=\D$ equipped with a trivialization over the punctured formal disc $\D^\punc$. By glueing with the trivial $G$-bundle on $C'=C-\{v\}$, we obtain, according to Beauville-Laszlo \cite{BL}, a $G$-bundle $E$ on $C$ with a section $\phi'$ over $C'$. The assumption $x\in X(\cO) \cap G(F)$ implies that $\phi'$ can be extended as a section $\phi:C \to M\wedge^G E$. Thus we have constructed a point $m=(E,\phi) \in M(k)$ satisfying the assumptions of Proposition \ref{formallysmooth}. By construction, there exists a unique point $\tilde m\in \tilde M(k)$ over $m$ such that $h(\tilde m)=x$.
\end{proof}

\section{Toric case}

In this section, $X$ will be a affine normal toric variety, and $G$ will be a split $k$-torus $T$. Let $\Lambda_*(T)=\Hom(\Gm,T)$ denote the group of cocharacters of $T$ and $\Lambda^*(T)$ the group of characters. We recall that a toric variety is a pair $(X,T)$ where $X$ is a algebraic variety containing a torus $T$ as an open dense subset such that the action of 
$T$ on itself by translation can be extended as an action of $T$ on $X$. In case where there is no confusion to be feared, we use the letter $X$ to denote the toric variety $(X,T)$. We will only consider toric varieties $X$ which are affine, and whose torus is split. 

The affine toric variety $X$ is determined by the strictly convex (i.e.\ not containing lines) cone in $\Lambda_*(T)\otimes \RR$ generated by the monoid $\cone$ of cocharacters $\lambda\in \Lambda_*(T)$ such that $\lim_{t\to 0} \lambda(t)$ exists in $X$. This monoid is finitely generated and, by normality, \emph{saturated} inside of $\Lambda_*(T)$; in this section, we will be using the term ``monoid'' for finitely generated submonoids of torsion-free abelian groups, and we will be saying that a monoid $\cone$ is ``saturated'' inside of an abelian group $\Lambda$ if $n\lambda\in \cone$ for $n\ge 0$ implies that $n\in\cone$.

We can also reconstruct $X$ from its ring of regular functions
\begin{equation}
k[X]=\Spec k\left[e^\alpha \mid \alpha \in \cone^*\right],
\end{equation}
where 
\begin{equation}
\cone^*=\left\{ \alpha\in \Lambda^*(T) | \langle \lambda,\alpha\rangle \geq 0, \forall \lambda\in \cone \right\}
\end{equation}
and $e^\alpha$ denotes the regular function $T\to \bA^1$ attached to the character $\alpha:T\to \Gm$.

Let $\cT$ denote the category whose objects are normal, affine toric varieties, and morphisms are morphisms of tori which extend to the toric varieties. The above construction 
\begin{equation}
(X,T) \mapsto (\cone,\Lambda_*(T))
\end{equation}
that associates a toric variety $(X,T)$ to the saturated submonoid $\cone$ of $\Lambda_*(T)$, defines an equivalence of categories from $\cT$ to the category of pairs $(\cone,\Lambda)$ where $\Lambda$ is a  finitely generated free abelian group and $\cone \subset \Lambda$ is its intersection with a strictly convex cone generated by finitely many elements of $\Lambda$. 

In particular, for every normal affine toric variety $X$, each element $\lambda\in\cone$ corresponds to a morphism $\Gm\to T$ which extends to a morphism $\Ga\to X$. In other words, each element $\lambda\in\cone$ corresponds to a morphism $(\Ga,\Gm) \to (X,T)$. In the opposite direction, an element $\alpha\in \cone^*$ corresponds to a morphism $(X,T) \to (\Ga,\Gm)$.   

The formal arc space of $X$ is the functor $\cL X$ that associates to every $k$-algebra $R$ the set $X(\cO_R)$ where $\cO_R=R[[t]]$ is the ring of formal series of variable $t$ with coefficients in $R$. As is section \ref{section:IC-function}, we will only consider the subfunctor $\cL^\circ X$ of $\cL X$ consisting of non-degenerate formal arcs with respect to the smooth open subset $X^\circ=T$. For every $k$-algebra $R$, $\cL^\circ X(R)$ is the set of $x:\Spec (\cO_R) \to X$ such that the projection from the open subset $U=x^{-1}(T)$ to $\Spec(R)$ is surjective. This subfunctor is represented by an open subscheme of $\cL X$ which is the complement of $\cL(X-T)$. 

We observe that every morphism $(X,T) \to (X',T')$ in the category $\cT$ induces a morphism $\cL X \to \cL X'$ and $\cL^\circ X \to \cL^\circ X'$. In particular, for $\lambda\in \cone$, we have a corresponding morphism of toric varieties $\lambda:(\Ga,\Gm)\to (X,T)$. We will denote 
\begin{equation}
t^\lambda= \lambda(t)
\end{equation}
the image of $t\in \cO\cap F^\times$ in $X(\cO)\cap T(F)$, where $\cO=k[[t]]$ and $F=k((t))$. We have a canonical bijection
\begin{equation}
\cL^\circ X(k)= X(\cO)\cap T(F).
\end{equation}
The orbits of $T(\cO)$ on $\cL^\circ X(k)$  are parametrized by the monoid $\cone$:
\begin{equation}\label{orbit-torus}
X(\cO)\cap T(F) = \bigsqcup_{\lambda\in \cone} T(\cO)t^{\lambda}.
\end{equation}
Indeed, $X(\cO)\cap T(F)$ is a $T(\cO)$-invariant subset of 
$T(F)=\bigsqcup_{\lambda\in \Lambda} T(\cO)t^{\lambda}$, and by very definition of $\cone$, we have $t^\lambda\in X(\cO)$ if and only if 
$\lambda\in\cone$.

\begin{theorem} \label{local-equality-toric}
Suppose that $k$ is a finite field.
The $\IC$ function of $\cL^\circ X$ is $T(\cO)$-invariant and can be identified with a formal series with exponents in $\cone$:
\begin{equation}
\IC_{\cL^\circ X}=\sum_{\lambda\in \cone} m_\lambda e^\lambda
 \in \Ql[[\cone]]
\end{equation}
where $m_\lambda$ denotes the value of the $\IC$-function at $t^\lambda$.

Let $\Prim(\cone)$ be the set of primitive elements of $\cone$ i.e nonzero elements $\mu\in\cone$ that cannot be  decomposed as a sum $\mu=\mu_1+\mu_2$ where $\lambda_1,\lambda_2$ are nonzero elements of $\cone$. Then the equality
\begin{equation}
\IC_{\cL^\circ X} = \prod_{\nu\in\Prim(\cone)} (1-e^\nu)^{-1}.
\end{equation}
holds in $\Ql[[\cone]]$. In other words, for every $\lambda\in\cone$, $m_\lambda$ is the number of ways to decompose $\lambda$ as sum of primitive elements. 
\end{theorem}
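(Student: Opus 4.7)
The plan is to compute $\IC_{\cL^\circ X}$ via the global finite-dimensional models of Section~\ref{section:IC-function}. First, I would observe that $T(\cO)$ acts on $\cL^\circ X$ by automorphisms (left multiplication), so the $\IC$-function is constant on $T(\cO)$-orbits; combined with the orbit decomposition \eqref{orbit-torus}, this gives $\IC_{\cL^\circ X} = \sum_{\lambda \in \cone} m_\lambda e^\lambda$ with $m_\lambda := \IC_{\cL^\circ X}(t^\lambda)$. By Proposition~\ref{existence-of-global-point} and \eqref{global-local}, it suffices, for each $\lambda \in \cone$, to exhibit a point $x_\lambda \in M(k)$ lifting $t^\lambda$ and compute $\IC_M(x_\lambda)$.

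For $X$ toric, a non-degenerate $X$-morphism $\phi \colon E_0 \to E$ between $T$-bundles on $C$ is determined by assigning to each closed point $c \in C$ a ``valuation'' $\phi(c) \in \cone$, nonzero for only finitely many $c$. Summing yields a locally constant function $M \to \cone$, producing a decomposition $M = \bigsqcup_{\lambda \in \cone} M_\lambda$; the sought-after $x_\lambda$ is the $\cone$-valued divisor $\lambda \cdot v$, and each $M_\lambda$ is a scheme of finite type.

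The main geometric step is the construction of a finite birational cover. Enumerate $\Prim(\cone) = \{\nu_1, \dots, \nu_n\}$ and set
\[
N_\lambda := \bigsqcup_{(d_i) \in \NN^n,\ \sum_i d_i \nu_i = \lambda}\ \prod_{i=1}^n \Sym^{d_i} C,
\]
a finite disjoint union (by strict convexity of $\cone$) of smooth projective varieties. The natural map $f \colon N_\lambda \to M_\lambda$, $(D_1, \dots, D_n) \mapsto \sum_i \nu_i D_i$, is finite and surjective, and is an isomorphism over the dense open locus $M_\lambda^\bullet \subset M_\lambda$ parametrizing divisors $\sum_c \lambda_c \cdot c$ whose nonzero values $\lambda_c$ are all primitive: by the very definition of primitive, each such configuration admits a unique presentation as $\sum_i \nu_i D_i$. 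Since distinct connected components of $N_\lambda$ dominate distinct irreducible components of $M_\lambda$, the pushforward criterion stated in Section~\ref{section:IC-function} for finite birational maps with smooth source yields $\IC_{M_\lambda} = f_* {\Ql}_{N_\lambda}$.

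It follows that $m_\lambda = \#f^{-1}(x_\lambda)(k) = \#\{(d_i) \in \NN^n : \sum_i d_i \nu_i = \lambda\}$, since a tuple in the fiber must have each $D_i$ equal to $d_i \cdot v$. Summing over $\lambda$ yields
\[
\sum_{\lambda \in \cone} m_\lambda e^\lambda \ =\ \prod_{i=1}^n \sum_{d \geq 0} e^{d \nu_i} \ =\ \prod_{\nu \in \Prim(\cone)} (1 - e^\nu)^{-1}.
\]
The main obstacle will be establishing the modular description (representability of $M_\lambda$ and its identification as the space of $\cone$-valued divisors) and verifying the hypotheses of the pushforward criterion: one must check that $M_\lambda^\bullet$ is dense (the complement, where some $\lambda_c$ decomposes nontrivially in $\cone$, has strictly smaller dimension in each irreducible component) and that distinct components of $N_\lambda$ map to distinct irreducible components of $M_\lambda$.
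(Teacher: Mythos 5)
Your proposal takes essentially the same route as the paper: in both cases the key step is the construction, for each degree $\lambda$, of a finite birational map from a disjoint union of products of symmetric powers of $C$ (indexed by decompositions of $\lambda$ into primitive elements of $\cone$) onto the global model $M_\lambda$; this is precisely the resolution of singularities produced in Corollary~\ref{resolution-singularity}, after which one applies the pushforward criterion for $\IC$ and counts fiber points. The one place where your sketch is thinner than the paper is the claim that $f$ is an isomorphism (of schemes, not merely a bijection on $k$-points) over $M_\lambda^\bullet$ --- the paper handles this by embedding into the free-monoid case and producing a section of $U_\mu\to M_\mu$ --- but you have correctly flagged this as part of the remaining ``hypotheses to verify.''
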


To prove the theorem, we consider the global  analogue of $\cL X$. Let $C$ be a smooth projective geometrically connected curve defined over $k$. We consider the functor $M=M_X$ on the category of $k$-algebras which associates to every $k$-algebra $R$ the groupoid of maps $\phi: C_R\to [X/T]$ such that the preimage $\phi^{-1}([T/T])$ of $[T/T]$ is an open subset $U \subset C_R$ whose projection on $\Spec(R)$ is surjective. The association $X\to M=M_X$ is functorial in $\cT$.

\begin{proposition} \label{representable}
The functor $R\mapsto M(R)$ defined as above is representable by countable disjoint union of projective schemes over $k$. 
\end{proposition}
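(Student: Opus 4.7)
The strategy is to identify $M(R)$ with the set of monoid homomorphisms from $\cone^*$ to the monoid of relative effective Cartier divisors on $C_R$, and then realize this functor as a closed subfunctor of a countable product of symmetric powers of the curve $C$.

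First, I would establish the key equivalence of functors
\begin{equation}
M(R) \;\simeq\; \Hom_{\mathrm{mon}}\bigl(\cone^{*},\, \mathrm{Div}_{+}(C_R/R)\bigr),
\end{equation}
where $\mathrm{Div}_{+}(C_R/R)$ denotes the commutative monoid of relative effective Cartier divisors on $C_R$ over $R$. Unpacking: an $R$-point of $M$ is a $T$-torsor $E$ on $C_R$ together with a $T$-equivariant morphism $\phi:E\to X$. Since $k[X]=k[e^{\alpha}\mid\alpha\in\cone^{*}]$, giving such a $\phi$ amounts to specifying, for each $\alpha\in\cone^{*}$, a section $s_{\alpha}$ of the line bundle $L_{\alpha}$ associated to $E$ and $\alpha$, with the multiplicativity $s_{\alpha+\beta}=s_{\alpha}\cdot s_{\beta}$ under the canonical isomorphism $L_{\alpha+\beta}\simeq L_{\alpha}\otimes L_{\beta}$. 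The non-degeneracy condition forces each $s_{\alpha}$ to be a regular section on every fiber of $C_R\to\Spec R$, hence to cut out a relative effective Cartier divisor $D_{\alpha}$. Conversely, from a monoid homomorphism $\alpha\mapsto D_{\alpha}$, one sets $L_{\alpha}:=\cO(D_{\alpha})$ with its tautological section and reconstructs the $T$-torsor $E$: the system $(L_{\alpha})_{\alpha\in\cone^{*}}$ extends uniquely to a system indexed by all of $\Lambda^{*}(T)$ because $\cone$ is strictly convex, so $\cone^{*}$ is full-dimensional in $\Lambda^{*}(T)\otimes\RR$ and therefore generates $\Lambda^{*}(T)$ as a group. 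Since $\phi$ generically lands in the open orbit $[T/T]\simeq\mathrm{pt}$, which has trivial stabilizer, the object $(E,\phi)$ has no nontrivial automorphisms, so the groupoid $M(R)$ is in fact a set, and the above is an honest equivalence of set-valued functors.

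Next, using that $\cone^{*}$ is a finitely generated commutative monoid (hence finitely presented), I would fix a finite presentation: generators $\alpha_{1},\ldots,\alpha_{n}$ and relations $\sum_{j}a_{ij}\alpha_{j}=\sum_{j}b_{ij}\alpha_{j}$ for $i=1,\ldots,r$. Under the equivalence above, $M$ becomes the closed subfunctor of $\bigl(\mathrm{Div}_{+}(C)\bigr)^{n}$ cut out by the divisor equalities $\sum_{j}a_{ij}D_{j}=\sum_{j}b_{ij}D_{j}$, which are closed conditions. Since $\mathrm{Div}_{+}(C)=\bigsqcup_{d\geq 0}C^{(d)}$ is representable by the disjoint union of the symmetric powers of $C$, the product $\bigl(\mathrm{Div}_{+}(C)\bigr)^{n}$ is a countable disjoint union of projective schemes $\prod_{j}C^{(d_{j})}$ indexed by $(d_{1},\ldots,d_{n})\in\NN^{n}$. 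Every relation preserves total degree, so $M$ decomposes into a countable disjoint union of closed subschemes of these projective schemes, which are themselves projective.

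The main obstacle is verifying the equivalence in the first step over an arbitrary test ring $R$, not only over fields or geometric points. The sensitive ingredient is the translation of the topological non-degeneracy hypothesis---that $\phi^{-1}([T/T])$ is open in $C_R$ and surjects onto $\Spec R$---into the algebraic statement that each $s_{\alpha}$ is a non-zero-divisor on every fiber of $C_R\to\Spec R$, so that it defines a relative Cartier divisor flat over $R$. This is a fiberwise assertion on the smooth curve $C$, where a generically nonzero section of a line bundle is automatically a regular section and cuts out an effective divisor; the compatibilities $s_{\alpha+\beta}=s_{\alpha}\cdot s_{\beta}$ then translate directly to $D_{\alpha+\beta}=D_{\alpha}+D_{\beta}$. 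Once this translation is in hand, representability and projectivity fall out from standard properties of symmetric powers and of closed immersions into products of projective schemes.
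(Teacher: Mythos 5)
Your proof is correct, and it ends up in the same place as the paper's --- $M$ realized as a closed subscheme of a countable disjoint union of products of symmetric powers of $C$ --- but the mechanism at the decisive step is genuinely different. The paper first computes $M$ for $X=\bA^1$ and $X=\bA^r$ (obtaining $C_\N$ and $(C_\N)^r$), then chooses a generating set $P$ of $\cone^*$ to get a closed embedding of toric varieties $X\hookrightarrow X_P\simeq\bA^{|P|}$, and proves that the induced map $M_X\to M_P$ is a closed immersion by exhibiting $M_X$ as the fiber of a morphism $M_P\to M''$ (for an auxiliary quotient toric variety $X''$) over a single point, invoking separatedness of $M''$. You instead identify $M(R)$ with the set of monoid homomorphisms $\cone^*\to \mathrm{Div}_+(C_R/R)$, choose a finite presentation of $\cone^*$ (legitimate by R\'edei's theorem that finitely generated commutative monoids are finitely presented), and cut $M$ out of a finite product of copies of $C_\N$ by the relations, each of which is a closed condition because it is the preimage of the diagonal of a symmetric power under the proper addition maps. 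Your route is more explicit at the level of functors of points and dispenses with the auxiliary variety $X''$; the price is that you must justify the dictionary between non-degenerate points of $[X/T]$ and compatible systems of sections of line bundles --- in particular the unique extension of the monoidal functor from $\cone^*$ to its group completion $\Lambda^*(T)$ (which uses that $\cone^*$ is full-dimensional, hence generates $\Lambda^*(T)$ as a group), the fact that fiberwise-nonzero sections of line bundles on $C_R$ cut out relative effective Cartier divisors, and the absence of nontrivial automorphisms of objects of $M(R)$ --- all of which you correctly flag and all of which are standard. Both arguments ultimately rest on the same two facts: $\mathrm{Div}_+(C)=C_\N=\bigsqcup_d C_d$ and the separatedness of symmetric powers of the curve.
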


\begin{proof}
First, we consider the case where $T=\Gm$ and $X=\bA^1$. In this case, $M$ classifies pairs $(E,\phi)$ where $E$ is a invertible sheaf over $C$ and $\phi$ is a nonzero global section. The zero divisor of $\phi$ defines a point of the $n$-th symmetric power $C_n$ of $C$ where $n$ is the degree of $E$. It is well known that this induces an isomorphism of functors
\begin{equation}
M = C_\N = \bigsqcup_{n\in\N} C_n
\end{equation}
between $M$ and the disjoint union of symmetric powers of $C$.

Next, we consider the case where $\cone=\N^r$ is a free monoid. In this case $T=\Gm^r$ and $X=\bA^r$. We derive from the $\bA^1$-case that
\begin{equation} \label{M-free-monoid}
M= (C_\N)^r
\end{equation}

Now we consider the general case of a toric variety $(X,T)$ corresponding to a pair $(\cone,\Lambda)$ formed by a saturated strictly convex monoid $\cone$ inside a finitely generated free abelian group $\Lambda$. There is a canonical way to embed $X$ the moduli space $M$ for $(X,T)$ into the moduli space $M$ for a free monoid.

We consider the dual monoid $\cone^* \subset \Lambda^*$ consisting of elements $\alpha$ in the dual abelian group $\Lambda^*$ which take nonnegative values on $\cone$. Let $P \subset \cone^*$ be a set of elements that generate $\cone^*$ \emph{as a monoid}. We denote $\cone_P^*=\N^P$ and $\Lambda_P^*=\Z^P$ respectively the free monoid and the free abelian group generated by $P$. There is a canonical surjective map of monoids $\cone_P^* \to \cone^*$ and of abelian groups $\Lambda_P^* \to \Lambda^*$. By duality we have a monoid $\cone_P \subset \Lambda_P$ with injective maps $\cone \to \cone_P$ and $\Lambda\to \Lambda_P$, and closed embeddings $T\hookrightarrow T_P$, $X\hookrightarrow X_P$.

The pair $(\cone_P,\Lambda_P)$ corresponds to a toric variety $(X_P,T_P)$ with $T_P\simeq \Gm^r$ and $X_P\simeq \bA^r$ where $r$ is the cardinality of $P$. Let us denote $M_P$ the moduli space $M$ corresponding to the pair $(X_P,T_P)$ which is representable and can be described by \eqref{M-free-monoid}.  

Let $\Lambda'' = \Lambda_P/\Lambda$, and choose a free monoid $\cone''\subset\Lambda''$ that contains the image of $\cone_X$. (Notice that the image of $\cone_X$ is necessarily strictly convex.) Thus, the pair $(\cone'',\Lambda'')$ also corresponds to a toric variety $X''$ and its moduli space $M''$ which is representable, and it follows from the definitions that 
we have cartesian diagrams 

\begin{equation} 
\begin{tikzcd}
X \arrow{r}  \arrow{d}[swap]{}
& X_P \arrow{d} \\
\{1\} \arrow{r} & X'' 
\end{tikzcd}
\end{equation}

\noindent and

\begin{equation}\label{bc}
\begin{tikzcd}
M_X \arrow{r}  \arrow{d}[swap]{}
& M_P \arrow{d} \\
\{m''_0\} \arrow{r} & M'' 
\end{tikzcd}
\end{equation}
where $m''_0$ is the $k$-point of $M''$ corresponding to the trivial $T''$-bundle equipped with a trivialization. Since $M''$ is a separated algebraic space locally of finite type over $k$, the map $\{m''_0\} \to\M''$ is a closed embedding, and so is the map $M_X\to M_P$.
\end{proof}

The argument in the previous proof used the closed embedding $X\to X_P$ to deduce that $M_X\to M_P$ is also a closed embedding, but in fact we can have an embedding $M_X\hookrightarrow M_Y$ even if $X\to Y$ is not closed or is injective, as the following proposition shows.

\begin{proposition}\label{M-closed-embedding} 
Consider a morphism $(X,T)\to (Y,A)$ of toric varieties in $\cT$, represented by a morphism of their cocharacter monoids and groups: $(\cone_X, \Lambda_X)\to (\cone_Y,\Lambda_Y)$.

If the map $\cone_X\to \cone_Y$ is an isomorphism, the induced morphism $M_X\to M_Y$ is an equivalence.

If the map $\cone_X\to \cone_Y$ is injective, the induced morphism of schemes: $M_X\to M_Y$ is a closed embedding.
\end{proposition}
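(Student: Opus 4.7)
For the first assertion, the key observation is that within the category $\cT$ the torus $T$ is the open dense orbit of $X$, which forces $\cone_X$ to generate $\Lambda_X$ as an abelian group. Thus $\Lambda_X$ is canonically identified with the Grothendieck group $\cone_X^{\mathrm{gp}}$, and any isomorphism of monoids $\cone_X \xrightarrow{\sim} \cone_Y$ extends uniquely to an isomorphism of lattices $\Lambda_X \xrightarrow{\sim} \Lambda_Y$. Via the equivalence of categories established earlier in this section, this lifts to an isomorphism of toric pairs, hence to an isomorphism of quotient stacks $[X/T] \simeq [Y/A]$, and hence to an equivalence $M_X \simeq M_Y$ of their mapping stacks.

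For the second assertion, the plan is to reduce to the closed-embedding construction already used in the proof of Proposition~\ref{representable}. First I would fix a finite set $P$ of monoid generators of $\cone_Y^*$; the construction from the proof of Proposition~\ref{representable} then yields a closed embedding $(\cone_Y,\Lambda_Y) \hookrightarrow (\cone_P,\Lambda_P)$ with $\cone_P = \N^P$ free, and realizes $M_Y$ as a closed subscheme of $M_P$. Composing with the hypothesized injection gives a pair injection $(\cone_X,\Lambda_X) \hookrightarrow (\cone_P,\Lambda_P)$, and the first substantive step is to show that $M_X$ also embeds as a closed subscheme of $M_P$.

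The central step will be to replay the cartesian-square argument of diagram~\eqref{bc} in this new setting. Setting $\Lambda'' := \Lambda_P/\Lambda_X$ and choosing a strictly convex free monoid $\cone''$ (working modulo torsion in $\Lambda''$ if necessary) that contains the image of $\cone_P$, one obtains an auxiliary toric variety $X''$ together with its moduli $M''$. Unwinding the universal property, the fiber of $M_P \to M''$ over the distinguished point $m_0'' \in M''$ corresponding to the trivial bundle should recover exactly $M_X$: being in this fiber requires simultaneously that the underlying $T_P$-bundle reduces along $T_X \hookrightarrow T_P$ and that the section factors through $X \hookrightarrow X_P$, which are precisely the defining conditions for a point of $M_X$. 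Separatedness of $M''$ then makes $\{m_0''\} \hookrightarrow M''$ a closed immersion, whose base change $M_X \hookrightarrow M_P$ is closed as well.

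With both $M_X \hookrightarrow M_P$ and $M_Y \hookrightarrow M_P$ closed embeddings, and $M_X \to M_P$ factoring through $M_Y$ by the functoriality of the $M$ construction, the conclusion follows from the standard categorical fact that if $g \circ f$ is a closed immersion and $g$ is separated then $f$ is a closed immersion. The main technical obstacle I anticipate is the possible appearance of torsion in $\Lambda_P/\Lambda_X$, which prevents the naive quotient from defining a toric pair; the remedy is to extract the torsion-free quotient of $\Lambda''$ and absorb the finite-group factor into $M''$ as a separate classifying-stack contribution, which does not affect the separatedness of $M''$ and so allows the cartesian-square argument to carry through.
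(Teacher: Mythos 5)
Both halves of your plan contain genuine gaps. For the first assertion, your ``key observation'' that density of $T$ in $X$ forces $\cone_X$ to generate $\Lambda_X$ is false: for $X=\bA^1\times\Gm$ with $T=\Gm^2$ one has $\cone_X=\N\times\{0\}$, which generates only a proper sublattice. The whole content of the first statement is precisely that $M$ depends only on the monoid and not on the ambient lattice, so that for instance $M_{\bA^1\times\Gm}\simeq M_{\bA^1}$ even though the toric varieties (and the stacks $[X/T]$, $[Y/A]$) are not isomorphic; your argument assumes this content away by reducing to the case of isomorphic lattices, which is not the general case. The paper instead writes $\Lambda_Y=\Lambda_X\times\Z^r$ and observes that the section $\phi$ itself trivializes the extra $\Gm^r$-factor of the bundle, whence the equivalence of functors.

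For the second assertion, the central step --- realizing $M_X$ as the fiber of $M_P\to M''$ over $m_0''$ with $\Lambda''=\Lambda_P/\Lambda_X$ --- fails whenever $\cone_X$ is strictly smaller than $\cone_P\cap\Lambda_X$. Concretely, take $\Lambda_X=\Lambda_Y=\Z^2$ with the identity map, $\cone_Y=\N^2$ and $\cone_X=\{(a,b): a\ge b\ge 0\}$; then $\Lambda''=0$, $M''$ is a point, and the ``fiber'' is all of $M_Y$, not $M_X$. Mapping to $m_0''$ only forces the section to factor through the toric subvariety of $X_P$ whose cone is the full intersection $\cone_P\cap\ker(\Lambda_P\to\Lambda'')$; it cannot detect a proper saturated submonoid of full rank. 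This is why the paper uses the base-change diagram \eqref{bc} only to reduce to the case $\Lambda_X\otimes\Q=\Lambda_Y\otimes\Q$, and then finishes by a different mechanism: injectivity on $S$-points (using that $T\to A$ has finite kernel $F$, so the trivialization of the $F$-torsor comparing two preimages over a dense open of $C\times S$ extends everywhere), plus properness via the valuative criterion, normality of $X$ and Hartogs' principle. Some argument of that kind is unavoidable; the fiber-product construction alone cannot deliver the second statement.
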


\begin{proof}
 For the first statement, we have $\Lambda_Y=\Lambda_X\times \Z^r$ as groups (for some $r$) and hence 
$$ A=T\times \Gm^{r}$$
and
$$ Y=X\times \Gm^{r}$$
(compatibly). 

For a test scheme $S$ and a pair $(E,\phi)$ consisting of an $A$-bundle $E$ over $C_S$ and a section $\phi: C_S \to Y\wedge^A E$, the projection of $\phi$ to $E \wedge^A \Gm^{r}$ defines a trivialization of the reduction of $E$ to a $\Gm^r$-bundle, therefore the map which takes any pair $(E',\phi')$ for $M_X$ to the pair $(E,\phi)=(E'\times \Gm^{r}, \phi' \times 1)$ for $M_Y$ is an equivalence of functors.

For the second statement, it is now enough to assume that $\Lambda_X$ is generated by $\cone_X$ and in particular that it also injects into $\Lambda_Y$. Then we use the last argument of the proof of the previous proposition to further reduce the statement to the case that $\Lambda_X\otimes_\Z \Q = \Lambda_Y\otimes_\Z \Q$. For, in general, we may replace $\Lambda_Y$ by $\Lambda_Z = \Lambda_Y \cap \Lambda_X\otimes \Q$ and $\cone_Y$ by $\cone_Z$ via a base change diagram as \eqref{bc}. 

Hence, from now on we assume that $\Lambda_X\otimes_\Z \Q = \Lambda_Y\otimes_\Z \Q$. We will show that the morphism $M_X\to M_Y$ is injective at the level of $S$-points for every test scheme $S$, and then that it is proper. This will prove that it is a closed embedding.

To show injectivity, assume that $(E,\phi)$ and $(E',\phi')$ in $M_X(S)$ with the same image in $M_Y(S)$. Notice that the injection $\Lambda_X\to \Lambda_Y$ corresponds to a map of tori: $T\to A$ with finite kernel $F$. Thus, there is an $F$-torsor $D$ over $C\times Y$ such that $E' = E\wedge^F D$, and an open $U\subset C\times S$ which surjects to $S$, such that the quotient between $\phi'$ and $\phi$ defines a trivialization of $D$ over $U$. Since $F$ is finite, this extends to a trivialization of $D$ over $C\times S$, which shows that $(E,\phi)\simeq (E',\phi')$.

Finally, we use the valuative criterion to prove properness: If $(R,\mathfrak m)$ is a discrete valuation ring with fraction field $K$, and $(E,\phi)\in M_Y(R)\cap M_X(K)$ (notice that by injectivity this last statement makes sense), then we claim that $(E,\phi)\in M_X(R)$. Indeed, let $U\subset C\times\Spec R$ denote the open subset over which $\phi$ has image in $A\wedge^A E\subset Y\wedge^A E$ and let $\tilde U = U\cup C\times \Spec K$ -- its complement has codimension greater or equal than $2$ in $C\times \Spec R$. 

The section $\phi$ defines a trivialization of $E$ over $U$. Moreover, by assumption, this trivialization extends to a reduction of $E$ to a $T$-bundle $\tilde E$ over $C\times\Spec K$. Thus, altogether, we have a $T$-bundle $\tilde E$ over $\tilde U$, and because $T$ is normal and the complement of $\tilde U$ has codimension at least $2$, this extends uniquely to a $T$-bundle $\tilde E$ over $C\times\Spec R$. Similarly, the section $\phi$ lifts by assumption to a section into $X\wedge^T \tilde E$ over $C\times \Spec K$, and hence to a section of $X\wedge^T \tilde E$ over $\tilde U$. By Hartogs' principle, since $X$ is normal, this extends to a section of $X\wedge^T\tilde E$ over the whole $C\times\Spec R$. This proves properness.
\end{proof}

Because of the proposition, one can think of the moduli space $M$ as being attached not to the toric variety $X$ itself, but to its associated monoid of cocharacters $\cone$. In fact, since this moduli space  specializes to the scheme of effective divisors (when $\cone=\N$), it is natural to think of $M$ as the scheme of ``$\cone$-valued divisors'', a notion that we now introduce.

A {\em $\cone$-valued divisor} on $C$ is
a formal sum $D=\sum_x \lambda_x x$  where $x$ runs over the set of  closed points of $C$ and $\lambda_x\in \cone$ with $\lambda_x=0$ for all but finitely many $x$. 

One can attach to each $k$-point of $M$ a $\cone$-valued divisor on $C$. Each $k$-point of $M$ corresponds to a map $\phi:C\to [X/T]$ such that the preimage $U=\phi^{-1}[T/T]$ is a nonempty subset of $C$. Over $U$, the underlying $T$-bundle $E$ of $\phi$ is trivialized by $\phi$. 
At each point $x\in C-U$, $\phi$ defines a coset in $X(\cO_c)/T(\cO_c)$ thus an element $\lambda_x\in \cone$ according to \eqref{orbit-torus} (with $\cO$ replaced by $\cO_c$). We define $\sum_{x\in C-U} \lambda_x x$ to be the $\cone$-valued divisor associated to $\phi$. 

\begin{lemma} \label{point-as-divisor}
The above construction induces a canonical bijection between $M(k)$  and the set of $\cone$-valued divisors on $C$.  
\end{lemma}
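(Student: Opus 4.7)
The plan is to exhibit an explicit inverse map from $\cone$-valued divisors on $C$ to $M(k)$, and then verify that the two assignments are mutually inverse. The main ingredients are the Beauville--Laszlo glueing principle and the local orbit description \eqref{orbit-torus}.

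First I would check that the assignment $\phi \mapsto \sum_{x \in C - U} \lambda_x x$ described in the text is well-defined. For any $k$-point $(E,\phi) \in M(k)$, the open $U = \phi^{-1}([T/T])$ is non-empty and therefore cofinite in $C$, so only finitely many $\lambda_x$ can be non-zero. At each $x \in C - U$, trivializing $E$ over the formal neighborhood $C_x = \Spec \cO_x$ turns $\phi|_{C_x}$ into an arc with values in $X$ that lands in $T$ over the punctured disc $C_x^\punc \subset U$; hence it is a non-degenerate arc in $X(\cO_x) \cap T(F_x)$, and its $T(\cO_x)$-orbit corresponds via \eqref{orbit-torus} to a unique element $\lambda_x \in \cone$, independent of the choice of trivialization.

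For the reverse direction, given $D = \sum \lambda_x x$, I would construct $(E_D, \phi_D)$ by glueing. Let $V = C - \Supp(D)$ and equip $V$ with the trivial $T$-bundle together with the constant section $1 \in T \subset X$; over each formal neighborhood $C_x$ with $x \in \Supp(D)$, take the trivial $T$-bundle with section $t_x^{\lambda_x} \in X(\cO_x)$, where $t_x$ is a uniformizer at $x$. Over the punctured neighborhood $C_x^\punc$, glue via multiplication by $t_x^{\lambda_x} \in T(F_x)$, which identifies the constant section $1$ with $t_x^{\lambda_x}$. By Beauville--Laszlo this data assembles into a $T$-bundle $E_D$ on $C$ together with an $X$-valued section $\phi_D$ that is generically in $T$, hence a well-defined point of $M(k)$.

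The two constructions are evidently mutually inverse: the divisor of $(E_D,\phi_D)$ is $D$ by construction and by \eqref{orbit-torus}, while any $(E,\phi) \in M(k)$ can be reconstructed from its associated divisor by the same glueing, once one chooses trivializations of $E$ near the support points (the outcome in $M(k)$ being independent of those choices by the well-definedness step). I do not anticipate a serious obstacle: the lemma is essentially a packaging of the local orbit classification \eqref{orbit-torus} through Beauville--Laszlo glueing, with the only mild care needed in checking that trivialization choices do not affect the bijection.
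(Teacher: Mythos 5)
Your proposal is correct and follows essentially the same route as the paper: the inverse map is constructed by Beauville--Laszlo glueing of the trivial $T$-bundle with constant section $1$ on the complement of the support against the trivial bundle with section $t_x^{\lambda_x}$ on each formal disc, matched via the orbit decomposition \eqref{orbit-torus}. Your additional checks of well-definedness and independence of trivialization choices are exactly the verifications the paper leaves to the reader.
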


\begin{proof} 
The converse construction is based on a variant of Beauville-Laszlo's formal patching theorem proved in \cite{Heinloth}. One can construct a $k$-point of $M$ attached to each $\cone$-valued divisors on $C$. Let $D=\sum \lambda_x x$ be a $\cone$-valued divisor on $C$. The associated point $\phi:C\to [X/T]$ can be represented as $T$-torsor $E$ over $C$ equipped with a section $\phi:C\to X\wedge^T E$. The pair $(E,\phi)$ can be obtained via formal patching from the following collection of data:
\begin{itemize}
\item over the open subset $C'$ complement of the finite set of points $x\in |C|$ where $\lambda_x\not=0$, we set $E'=T$ to be the trivial $T$-torsor and $\phi':C'\to X$ the constant section $\phi'=1$;
\item over each formal disc $C_x$ with uniformizing parameter $t_x$, we set $E_x$ to be the trivial $T$-torsor and $\phi_x= t_x^{\lambda_x} \in X(\cO_x)$;
\item there is a unique way to glue $(E',\phi')$ with $(E_x,\phi_x)$ over the punctured formal disc $C_x^\punc$.
\end{itemize}
One can check that the above constructions give rise to reciprocal bijections between the set of $k$-points of $M$ and the set of $\cone$-valued divisors on $C$. 
\end{proof}

A {\em multiset} in $\cone$ is an element $\mu$ of the free monoid generated by $\cone-\{0\}$. A multiset in $\cone$ will be written as a sum 
\begin{equation} \label{multiset}
\mu=\sum_{\lambda\in \cone} \mu(\lambda) e^\lambda \in \bigoplus_{\lambda\in\cone -\{0\}} \N e^\lambda.
\end{equation}
with the convention that $\mu(0)=0$. One can attach to each $\cone$-valued   divisor $D=\sum\limits_{x\in Z} \lambda_x x$ a  multiset 
\begin{equation}
\mu_D(\lambda)=\sum_{\lambda_x=\lambda} \deg(x)e^{\lambda}.
\end{equation}

There is a natural order on the set of multisets:  we will say that $\mu$ refines $\mu'$ and write $\mu\vdash \mu'$ if the difference $\mu-\mu'$ viewed as element of $\bigoplus_{\lambda\in\cone-\{0\}} \Z e^\lambda$ can be written as a sum of elements of the form $e^{\lambda'}+e^{\lambda''} - e^\lambda$ with $\lambda'+\lambda''=\lambda$ in $\cone$.  

We define the degree of a multiset $\mu$ as in \eqref{multiset} to be \begin{equation} \label{degree}
\deg(\mu)=\sum_{\lambda\in\cone} \mu(\lambda) \lambda \in\cone.
\end{equation}
The degree map defines a homomorphism of monoids $\bigoplus_{\lambda\in\cone-\{0\}} \N e^\lambda \to \Lambda$ whose fibers are finite sets. 
We define the degree of a $\cone$-valued divisor $D=\sum_x \lambda_x x$ by 
$$\deg(D)=\sum_x \lambda_x \deg(x).$$ 
It is easy to see that the degree of a $\cone$-valued divisor coincides with the degree of its multiset.

\begin{proposition} \label{stratification-toric}
There exists a unique stratification of $M$
\begin{equation}
M=\bigsqcup_\mu M_\mu
\end{equation}
in strata indexed by multisets $\mu\in \bigoplus_{\lambda\in\cone}\N e^\lambda$ such that:
\begin{enumerate}
\item $M_\mu(k)$ is the set of $k$-points of $M$ whose associated $\cone$-valued divisors have multiset $\mu$.
\item $M_\mu$ is isomorphic with an open subset $U_\mu \subset \prod C_{\mu(\lambda)}$, where $\lambda$ runs over the finite subset of $\cone$ where $\mu$ takes positive values, $C_{\mu(\lambda)}$ is the 
$\mu(\lambda)$-th symmetric power of $C$ classifying effective divisors $D_\lambda$ of degree $\mu(\lambda)$, and the open subset $U_\mu$ is defined by the condition that $\sum_\lambda D_\lambda$ is a multiplicity free divisor.

\item $M_{\mu'}$ lies in the closure of $M_\mu$ if and only if $\mu 
\vdash \mu'$.
\end{enumerate}
\end{proposition}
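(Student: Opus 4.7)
The plan is to define each stratum $M_\mu$ explicitly as an open subscheme of a product of symmetric powers of $C$, construct a canonical locally closed immersion into $M$ via formal patching, and then establish the closure relations by a one-parameter degeneration argument.

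First, for a multiset $\mu = \sum_\lambda \mu(\lambda) e^\lambda$ supported on the finite set $S_\mu = \{\lambda \in \cone - \{0\} : \mu(\lambda) > 0\}$, let $U_\mu \subset \prod_{\lambda \in S_\mu} C_{\mu(\lambda)}$ be the open subfunctor parametrizing tuples $(D_\lambda)_{\lambda \in S_\mu}$ of relative effective Cartier divisors on $C_S$ such that $\sum_\lambda D_\lambda$ is multiplicity-free. Given such a tuple I build $(E,\phi) \in M(S)$ by formal patching: take $E$ trivial and $\phi = 1$ on the complement of $\bigsqcup_\lambda D_\lambda$, and near each $D_\lambda$ use a local uniformizer $t$ of the relative divisor to define the section $t^\lambda$, glueing via Heinloth's variant of the Beauville-Laszlo theorem exactly as in the proof of Lemma \ref{point-as-divisor}. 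This yields a morphism $U_\mu \to M$; define $M_\mu$ to be its scheme-theoretic image.

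Items (1) and (2) should follow essentially from Lemma \ref{point-as-divisor}: the $k$-points of $M_\mu$ coincide with $\cone$-valued divisors of multiset $\mu$, hence the $M_\mu(k)$ partition $M(k)$, and the morphism $U_\mu \to M_\mu$ is bijective on geometric points. Combined with an unramifiedness check on tangent vectors (two infinitesimally nearby $\cone$-valued divisors share the same supports and cocharacter labels), this upgrades $U_\mu \to M_\mu$ to an isomorphism and shows $M_\mu$ is locally closed in $M$.

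For (3), I would apply the valuative criterion in two directions. Given an elementary refinement $\mu \vdash \mu'$ replacing $e^{\lambda'} + e^{\lambda''}$ by $e^{\lambda'+\lambda''}$, a family over $\Spec k[[s]]$ witnessing $M_{\mu'} \subset \overline{M_\mu}$ is produced by colliding a point of $D_{\lambda'}$ with a point of $D_{\lambda''}$; iterating along a chain of elementary refinements handles the general case. Conversely, for a family in $M(R)$ over a DVR with generic multiset $\mu$ and special multiset $\mu'$, the divisor support of $\phi^{-1}(X - T)$ is a relative effective Cartier divisor whose geometric fibers vary only by collision of points, and at each collision the local data at the merged point is the product $t^{\lambda'}\cdot t^{\lambda''} = t^{\lambda' + \lambda''}$, whose $T(\cO)$-orbit is classified by \eqref{orbit-torus}. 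The main obstacle will be this reverse direction: one must verify both that no subtler degeneration occurs besides divisor collision, and that the labels truly add within $\cone$ at each collision. The latter reduces to the monoid structure on $\cone$ and the local Cartan decomposition of $T(F)$, while the former follows from Proposition \ref{representable}, since each connected component of $M$ is proper and the total-degree map $M \to \cone$ is locally constant, so all flat limits are accounted for by divisor collisions within one proper component.
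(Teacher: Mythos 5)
Your overall architecture matches the paper's (construct the strata from products of symmetric powers, get the closure relations from collisions of points), but the one step you wave at --- upgrading the finite, geometrically bijective morphism $U_\mu\to M_\mu$ to an isomorphism via ``an unramifiedness check on tangent vectors'' --- is precisely the hard point, and as stated it is a gap. Since $M_\mu$ is only known as the image of a finite morphism, you know it is integral but not that it is normal, and a finite birational bijection onto an integral scheme need not be an isomorphism (normalization of a cusp); what you must actually prove is that the differential of $U_\mu\to M$ is injective at every multiplicity-free point, i.e.\ a genuine computation of the deformation space of the pair $(E,\phi)$. That this is not automatic is exactly the content of the Remark following the proposition: on the closures $\overline M_\mu$ the analogous map $\prod C_{\mu(\lambda)}\to\overline M_\mu$ can fail to be an immersion because the differential degenerates, so ``infinitesimally nearby $\cone$-valued divisors have the same labels'' is not a substitute for controlling the tangent space of the scheme $M$. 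The paper sidesteps the tangent-space computation entirely: it embeds $M$ into the moduli space $M_P$ of the free monoid $\cone_P$ (where $U_\mu\to M_{P,\mu}$ is an isomorphism by direct calculation with line bundles), and the induced map $M_\mu\to M_{P,\mu}\simeq U_\mu$ gives a section of $U_\mu\to M_\mu$; together with normality of $U_\mu$ this forces $U_\mu\simeq M_\mu$. You would need either to reproduce this section trick or to actually carry out the tangent-space estimate (plausibly again by passing to $M_P$).

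Two smaller points. First, the paper constructs the map $\prod C_{\mu(\lambda)}\to M$ not by re-running Beauville--Laszlo patching in families (where the choice of local uniformizer along a relative divisor needs care to be made canonical) but by functoriality --- each $\lambda\in\cone$ gives $(\bA^1,\Gm)\to(X,T)$, hence $C_\N\to M$ --- combined with the monoidal structure on $M$ inherited from the monoid structure on $X$; this also makes assertion (3) immediate, since $\overline M_\mu$ is then literally the image of the proper irreducible $C^n$ under $\iota_{\lambda_\bullet}$, so it is closed, equals the closure of $M_\mu$, and visibly contains exactly the strata $M_{\mu'}$ with $\mu\vdash\mu'$; your two-directional valuative-criterion argument is correct in spirit but unnecessary. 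Second, the scheme-theoretic image of $U_\mu\to M$ is the closed subscheme $\overline M_\mu$, not the locally closed stratum $M_\mu$; you should define $M_\mu$ as the image of the multiplicity-free locus inside $\overline M_\mu$ and check it is open there, as the paper does.
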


\begin{proof}
First we describe one-dimensional strata $M_\mu$ indexed by a simple multiset $\mu=e^\lambda$ with $\lambda\in \cone$. We will prove that $M_\mu\simeq C$. 
\begin{itemize}
\item In the case $X=\bA^1$ and $\lambda=1$, this stratum classifies pairs $(E,\phi)$ where $E$ is a line bundle of degree $1$ over $C$ and $\phi$ is a nonzero global section. 
In this case there is an isomorphism $C\simeq M_\mu$ defined by $x\mapsto (\cO_C(x),1)$. 
\item In general, each element $\lambda$ of $\cone$ defines a map $(\bA^1,\Gm) \to (X,T)$ and induces a morphism $C\to M$. This morphism is a closed embedding by Proposition \ref{M-closed-embedding}. It defines an isomorphism from $C$ to a closed subscheme of $M$ which will be denoted by $M_{e^\lambda}$. For later use, we will denote this isomorphism
\begin{equation}
x\mapsto \phi_{\lambda x} \in M_{e^\lambda}.
\end{equation}
We have thus constructed minimal strata $M_{e^\lambda}$ which satisfy the first and second assertions of the Proposition. 
\end{itemize}

We now observe that, as $X$ is a normal affine scheme, the action of $T$ on $X$ extending the action of $T$ on itself can be extended to an algebraic monoid structure on $X$. It follows there exists a monoidal structure on $M$: for any points $\phi,\phi'\in M$ one can construct a third point $\phi\otimes \phi' \in M$  such that the underlying $T$-bundles satisfy $E=E'\wedge^T E''$. 

For $\lambda_\bullet=(\lambda_1,\ldots,\lambda_n)$ an element of $\cone^n$ for some $n\in\N$, one can define a map
\begin{equation} \label{psi-lambda-bullet}
\iota_{\lambda_\bullet}: C^n \to M
\end{equation}
by 
\begin{equation} \label{tensor-toric}
(x_1,\ldots,x_n) \mapsto \phi_{\lambda_1 x_1}\otimes 
\cdots \otimes \phi_{\lambda_n x_n}.
\end{equation} 
The map  \eqref{psi-lambda-bullet} is proper since $C^n$ is proper and $M$ is representable by a scheme.

Let $\mu=\sum_{i=1}^n e^{\lambda_i}$ denote the multiset associated with $\lambda_\bullet$. We denote $\overline M_{\mu}$ the image of 
$\iota_{\lambda_\bullet}$ which is a closed subscheme of $M$ as 
$\iota_{\lambda_\bullet}$ is a proper morphism. We also observe that $\mu$ determines $\lambda_\bullet$ up to reordering. Nevertheless the reordering has no effect on the image by \eqref{tensor-toric}, and therefore the image of $\iota_{\lambda_\bullet}$ depends only on $\mu$. We denote by $M_\mu$ the image of $C^{n\circ}$ -- the disjoint locus of $C^n$ -- in $\overline M_\mu$. It is easy to check that $M_\mu$ is an open subset of $\overline M_\mu$ and $C^{n\circ}$ is its preimage. 

With these geometric ingredients now set up, we can go on and prove the three assertions of Proposition \ref{stratification-toric}:

\begin{enumerate}
\item[1.] The first assertion is clear from the formula \eqref{tensor-toric}.

\item[3.] Since $C^n$ is proper and irreducible, its image by $\iota_{\lambda_\bullet}$ is an irreducible closed subset of $M$. Thus $\overline M_\mu$ is the closure of $M_\mu$. It now follows from \eqref{tensor-toric} that $M_{\mu'} \subset \overline M_\mu$ if and only if $\mu\vdash \mu'$.

\item[2.] After reordering, we can suppose that 
\begin{equation}
\lambda_\bullet=(\lambda_1,\ldots,\lambda_1,\lambda_2,\ldots,\lambda_2,\ldots,\lambda_r)
\end{equation}
where $\lambda_1,\lambda_2\dots,\lambda_r$ are distinct elements of $\cone$ with $\lambda_i$ appearing $\mu(\lambda_i)$ times. The morphism 
$\bar \iota_{\lambda_\bullet}: C^{n \circ} \to M_\mu$ is a finite surjective morphism invariant under the action of $\fS_{\mu(\lambda_1)} \times \cdots \fS_{\mu(\lambda_r)}$ where $\fS_d$ denotes the symmetric group of rank $d$. It follows that $\bar \iota_{\lambda_\bullet}: C^{n \circ} \to M_\mu$ factors through a finite morphism $U_\mu \to M_\mu$, $U_\mu$ being defined in the statement of Proposition \ref{stratification-toric}. We also know that the morphism $U_\mu \to M_\mu$ induces bijection on geometric points. We need to prove that it is an isomorphism.

We would be done if we knew either that $M_\mu$ is normal or that $\bar \iota_{\lambda_\bullet}: C^{n \circ} \to M_\mu$ is flat. However we just know that $M_\mu$ is integral for it is defined as the image of a finite morphism from an integral scheme. Nevertheless, in the case of the free monoid $\cone_P$, we know that the morphism $U_\mu \to M_{P,\mu}$ is an isomorphism by direct calculation. The map $\cone \to \cone_P$, induces by functoriality a map $M_\mu \to M_{P,\mu}$. 
We then derive a section of the morphism  $U_\mu\to M_\mu$. It follows that $U_\mu\simeq M_{\mu}$ as $U_\mu$ is normal.
\end{enumerate}

Finally we prove that $M(\bar k)=\bigsqcup_\mu M_\mu(\bar k)$ for a separable closure $\bar k$ of $k$. According to Lemma \ref{point-as-divisor}, $\bar k$-points on $M$ correspond bijectively with $\cone$-valued divisors of $C\otimes_k \bar k$. Its remains to check that $\bar k$-points on $M_\mu$ correspond bijectively with $\cone$-valued divisors of $C\otimes_k \bar k$ of type $\mu$. This follows easily from the very construction of $M_\mu$. 
\end{proof}

\begin{corollary} \label{components-toric}
\begin{enumerate}
\item Each connected component of $M$ contains a unique closed stratum of the form $M_\mu$ where $\mu=e^\lambda$ for some $\lambda\in\cone$. In particular, there is a canonical bijection $\pi_0(M)=\cone$, and each stratum $M_\mu$ lies in the connected component $M^\lambda$ indexed by $\lambda=\deg(\mu)$.

\item Irreducible components of $M$ are the closures of strata $M_\mu$ where $\mu$ are primitive multisets i.e.\ multisets of the form $\mu=\sum \mu(\nu)e^\nu$ with $\mu(\nu)=0$ unless $\nu$ is a primitive element of the cone $\cone$.
\end{enumerate}
\end{corollary}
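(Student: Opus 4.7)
The plan is to deduce both assertions directly from the stratification of Proposition \ref{stratification-toric}, by analyzing the partial order $\vdash$ through the degree homomorphism $\deg:\bigoplus_{\nu\in\cone-\{0\}} \Z e^\nu \to \Lambda$. The key observation is that $\deg$ kills every generator $e^{\lambda'}+e^{\lambda''}-e^{\lambda'+\lambda''}$ of the refinement relation, so $\deg(\mu) = \deg(\mu')$ whenever $\mu \vdash \mu'$. Combined with statement (3) of Proposition \ref{stratification-toric} and the fact that each $M_\mu$ is connected (being open in the irreducible product $\prod_\lambda C_{\mu(\lambda)}$ of symmetric powers of the geometrically connected curve $C$), this yields a well-defined map $\pi_0(M) \to \cone$ sending $[M_\mu]$ to $\deg(\mu)$.

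For part (1), I will show this map is a bijection by proving that $\mu \vdash e^{\deg(\mu)}$ for every nonzero multiset $\mu$: writing $\mu = \sum_i e^{\nu_i}$ with $\sum_i \nu_i = \lambda$, one combines pairs of summands successively, staying inside the free monoid on $\cone-\{0\}$ at every step because $\cone$ is closed under addition. It follows that $M_{e^\lambda}$ lies in the closure of every $M_\mu$ with $\deg(\mu)=\lambda$, hence all such strata lie in a single component $M^\lambda$; this gives both injectivity and surjectivity, since $M_{e^\lambda}\simeq C$ is nonempty. The same minimality of $e^\lambda$ identifies $M_{e^\lambda}$ as the unique closed stratum of $M^\lambda$: any $M_\mu$ with $\mu\ne e^\lambda$ and $\deg(\mu)=\lambda$ strictly contains $M_{e^\lambda}$ in its closure, so is not closed.

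For part (2), each $M_\mu$ is irreducible and the $\{M_\mu\}$ partition $M$, so the irreducible components of $M$ are the maximal elements of $\{\overline{M_\mu}\}$ under inclusion. By statement (3) of Proposition \ref{stratification-toric} and the transitivity of $\vdash$, one has $\overline{M_\mu} \subset \overline{M_{\mu^*}}$ iff $\mu^* \vdash \mu$, so the maximal $\overline{M_\mu}$ correspond exactly to multisets $\mu$ maximal under $\vdash$. A direct check shows that $\mu$ admits a proper refinement iff some $\nu$ in its support decomposes as $\nu_1+\nu_2$ with $\nu_1,\nu_2\in\cone-\{0\}$, i.e., iff $\mu$ is supported on non-primitive elements. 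Hence the maximal multisets are precisely those supported on $\Prim(\cone)$, giving the claimed description of the irreducible components. Given that Proposition \ref{stratification-toric} carries out all the geometric work, the proof is essentially combinatorial, and I anticipate no substantive obstacle beyond careful bookkeeping with the preorder $\vdash$ and its compatibility with the degree.
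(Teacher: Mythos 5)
Your argument is correct and takes essentially the same route as the paper's proof: both reduce the corollary to the combinatorics of the preorder $\vdash$ on multisets, using that $\deg$ is invariant under refinement, that $e^{\deg(\mu)}$ is the unique minimal coarsening of $\mu$ (part 1), and that the maximal multisets are exactly those supported on $\Prim(\cone)$ (part 2). Your write-up simply makes explicit a few steps the paper leaves implicit.
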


\begin{proof}

\begin{enumerate}
\item[1.] Minimal elements for the partial order $\vdash$ are multisets of the form $\mu=e^\lambda$ for some $\lambda\in\cone$. Thus each stratum $M_\mu$ contains a unique closed stratum in its closure that is $M_{e^{\deg(\mu)}}$. 
Thus two strata $M_\mu$ and $M_{\mu'}$ belong to the same connected components of $M$ if and only if $\deg(\mu)=\deg(\mu')$. 
\item[2.]
Maximal multisets are those that cannot be further refined i.e.\ multisets of the form $\mu=\sum \mu(\nu)e^\nu$ with $\mu(\nu)=0$ unless $\nu$ is a primitive element of $\cone$. Indeed if $\mu=\sum \mu(\nu)e^\nu$ with $\mu(\nu)>0$ for some nonprimitive element $\nu$, then $\mu$ can be refined by replacing the term $e^\nu$ with $e^{\nu'}+e^{\nu''}$ where $\nu',\nu''$ are elements of $\cone$ adding up to $\nu$. 
\end{enumerate}
\end{proof}

\begin{corollary} \label{resolution-singularity}
For each primitive multiset $\mu=\sum \mu(\nu)e^\nu$, we set
$$\mu_\bullet=(\nu_1,\ldots,\nu_1,\nu_2,\ldots,\nu_2,\ldots,\nu_r)$$
where $\nu_1,\nu_2,\ldots,\nu_r$ are distinct primitive elements of $\cone$, $\nu_i$ appearing $\mu(\nu_i)$ times. The map $\iota_{\mu_\bullet}$ defined in \eqref{tensor-toric} gives rise to the normalization of $\overline M_{\mu}$ of the form
\begin{equation}\label{normalization}
\iota_\mu:C_\mu=C_{\mu(\nu_1)} \times \cdots \times C_{\mu(\nu_r)} \to \overline M_\mu.
\end{equation}

Moreover, the disjoint sum of $\iota_\mu$ ranging over all primitive multisets $\mu$ defines a normalization of $M$. This normalization turns out to be a resolution of singularities.
\end{corollary}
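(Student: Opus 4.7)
The plan is to verify that each $\iota_\mu: C_\mu \to \overline{M}_\mu$ is a finite birational morphism from a smooth scheme, hence \emph{is} the normalization of $\overline{M}_\mu$, and then to assemble these maps into the normalization of $M$ via the description of irreducible components given by Corollary \ref{components-toric}(2).

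First, $C_\mu = \prod_{i=1}^r C_{\mu(\nu_i)}$ is smooth, being a product of symmetric powers of a smooth projective curve; in particular it is normal and integral. Next, from the construction in the proof of Proposition \ref{stratification-toric}, the map $\iota_{\mu_\bullet}: C^{|\mu_\bullet|} \to M$ is proper with image the closed subscheme $\overline{M}_\mu$, and it factors through $C_\mu$ via the quotient by $\fS_{\mu(\nu_1)} \times \cdots \times \fS_{\mu(\nu_r)}$. Since the quotient $C^{|\mu_\bullet|} \to C_\mu$ is finite and surjective, the induced map $\iota_\mu: C_\mu \to \overline{M}_\mu$ is proper as well. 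Moreover, part (2) of Proposition \ref{stratification-toric} identifies its restriction over the dense open stratum $M_\mu \subset \overline{M}_\mu$ with the isomorphism $U_\mu \xrightarrow{\sim} M_\mu$; hence $\iota_\mu$ is birational.

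For quasi-finiteness, Lemma \ref{point-as-divisor} identifies a geometric point of $\overline{M}_\mu$ with a $\cone$-valued divisor $D = \sum_x \lambda_x\, x$ whose multiset $\mu'$ satisfies $\mu \vdash \mu'$, while a geometric point of $C_\mu$ is a tuple $(D_1,\ldots,D_r)$ of ordinary effective divisors with $\deg(D_i) = \mu(\nu_i)$; the morphism $\iota_\mu$ sends such a tuple to the $\cone$-valued divisor $\sum_i \nu_i D_i$. The fiber of $\iota_\mu$ over $D$ therefore parametrizes decompositions $\lambda_x = \sum_i n_{i,x}\, \nu_i$ with $n_{i,x} \in \NN$ for each $x$ in the support of $D$, subject to $\sum_x n_{i,x} = \mu(\nu_i)$. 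Since the nonnegative integers $n_{i,x}$ are uniformly bounded by the finitely many $\mu(\nu_i)$, each fiber is finite. A proper morphism with finite fibers is finite, and a finite birational morphism from a normal integral scheme to an integral scheme is the normalization of its target; this establishes the claim for $\iota_\mu$.

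For the global statement, Corollary \ref{components-toric}(2) identifies the irreducible components of $M$ with the closures $\overline{M}_\mu$ for primitive multisets $\mu$. Since the normalization of a reduced Noetherian scheme is canonically the disjoint union of the normalizations of its irreducible components, the disjoint sum $\bigsqcup_\mu \iota_\mu: \bigsqcup_\mu C_\mu \to M$ is a normalization of $M$; and since each $C_\mu$ is smooth, this normalization is simultaneously a resolution of singularities. The main technical point is the finiteness of the fibers of $\iota_\mu$ over the deeper strata $M_{\mu'}$ with $\mu \vdash \mu'$ strictly: although birationality over the open stratum is formal, one must exploit the boundedness dictated by strict convexity of $\cone$ (through the constraints $\sum_x n_{i,x} = \mu(\nu_i)$) to rule out positive-dimensional fibers over boundary points.
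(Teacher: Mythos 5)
Your proof is correct and follows essentially the same route as the paper: factor $\iota_{\mu_\bullet}$ through the symmetric-group quotient $C_\mu$, observe the resulting map is finite and birational onto $\overline M_\mu$ (birationality coming from part (2) of Proposition \ref{stratification-toric}), conclude it is the normalization since $C_\mu$ is normal, and sum over primitive multisets using Corollary \ref{components-toric}(2). Your explicit check of quasi-finiteness over the deeper strata is a welcome detail that the paper's proof leaves implicit.
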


\begin{proof}
The morphism 
$$\bar\iota_{\mu_\bullet}: C^n \to \overline M_\mu$$
defined in \eqref{tensor-toric} is invariant under $\fS_{\mu(\nu_1)} \times\cdots\times \fS_{\mu(\nu_r)}$ and therefore factors through a finite morphism 
\begin{equation}
\iota_\mu:C_{\mu(\nu_1)} \times \cdots \times C_{\mu(\nu_r)} \to \overline M_\mu.
\end{equation}
During the proof of assertion 2 of Proposition \ref{stratification-toric}, we have seen that this morphism is an isomorphism over the dense open subset $M_\mu$. It is thus the normalization of $\overline M_\mu$.

Since $C_\mu$ is smooth, the disjoint sum of $\iota_\mu:C_\mu\to \overline M_\mu$ is a resolution of singularities of $M$. 
\end{proof}

\begin{proposition}
Let $k$ be a finite field. The IC-function $M(k) \to \Ql$ can be expressed as a formal series 
\begin{equation} \label{IC-M-toric}
\IC_M= \sum_{D=\sum_x \lambda_x x} a_D \prod_x e_x^{\lambda_x}
\end{equation}
where $D$ runs over the monoid of $\cone$-valued divisors on $C$, $a_D$ is the trace of Frobenius on the stalk of $\IC$ over the $k$-point of $M$ corresponding to $D$. Moreover, there is an equality of formal series:
\begin{equation} \label{global-equality-toric}
\IC_M = \prod_{x\in |C|} \prod_{\nu\in\Prim(\cone)} (1-e_x^\nu)^{-1}.
\end{equation}
\end{proposition}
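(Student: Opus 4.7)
The plan is to combine Lemma~\ref{point-as-divisor} (identifying $k$-points of $M$ with $\cone$-valued divisors) with the resolution of singularities of Corollary~\ref{resolution-singularity} in order to compute $\IC_M$ pointwise, and then to recognize the resulting generating series as the stated Euler product.

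Equation~\eqref{IC-M-toric} is essentially bookkeeping: by Lemma~\ref{point-as-divisor}, $k$-points of $M$ correspond bijectively to $\cone$-valued divisors $D = \sum_x \lambda_x x$ (over closed points of $C$), which I label by the monomials $\prod_x e_x^{\lambda_x}$ with coefficients $a_D := \IC_M(m_D)$. To compute $a_D$, I would first identify $\IC_M$ as a direct sum indexed by primitive multisets. By Corollary~\ref{components-toric}, the irreducible components of $M$ are the closures $\overline M_\mu$ for $\mu$ primitive, and each $M_\mu$ is smooth (being open in a product of symmetric powers, by Proposition~\ref{stratification-toric}), so $U := \bigsqcup_{\mu \text{ primitive}} M_\mu$ is a smooth open dense subscheme of $M$ whose connected components are exactly the $M_\mu$. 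The IC normalization~\eqref{IC} then gives $\IC_M = \bigoplus_{\mu \text{ primitive}} (j_\mu)_{!*} \Ql$, where $j_\mu : M_\mu \into M$. Because $\iota_\mu : C_\mu \to \overline M_\mu$ is a finite birational morphism from a smooth scheme (Corollary~\ref{resolution-singularity}), the rule $\IC_X = p_* \Ql_Y$ recalled in Section~\ref{section:IC-function} identifies $(j_\mu)_{!*} \Ql$ with $(\iota_\mu)_* \Ql_{C_\mu}$ extended by zero to $M$; finiteness of $\iota_\mu$ then reduces the trace of Frobenius at $m_D$ to a fibre count,
\[
a_D = \sum_{\mu \text{ primitive}} \bigl| \iota_\mu^{-1}(m_D)(k) \bigr|.
\]

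A $k$-point of $\iota_\mu^{-1}(m_D)$ is a tuple $(E_\nu)_{\nu \in \Prim(\cone)}$ of effective $k$-rational divisors on $C$ with $\deg(E_\nu) = \mu(\nu)$ and $\sum_\nu \nu \cdot E_\nu = D$. Writing $E_\nu = \sum_x n_{\nu,x} x$ with $n_{\nu,x} \in \NN$, the constraint $\sum_\nu \nu \cdot E_\nu = D$ decouples into independent local conditions $\sum_\nu n_{\nu,x} \nu = \lambda_x$ at each closed point $x$, while the degree conditions $\sum_x n_{\nu,x} \deg(x) = \mu(\nu)$ merely record the multiset $\mu$ determined by this local data. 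Summing over all primitive $\mu$ therefore removes the redundancy and yields
\[
a_D = \prod_{x \in |C|} c(\lambda_x), \qquad c(\lambda) := \bigl| \{(n_\nu) \in \NN^{\Prim(\cone)} : \textstyle\sum_\nu n_\nu \nu = \lambda\} \bigr|.
\]
A geometric series expansion gives $\prod_{\nu \in \Prim(\cone)} (1 - e_x^\nu)^{-1} = \sum_\lambda c(\lambda) e_x^\lambda$, and taking the product over $x \in |C|$ matches these coefficients term-by-term, proving~\eqref{global-equality-toric}.

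The main delicate point is globally justifying the direct-sum decomposition of $\IC_M$, since $M$ has infinitely many connected components and a single component $M^\lambda$ may contain irreducible components of different dimensions. This is handled one component at a time (Corollary~\ref{components-toric}): inside $M^\lambda$ only the finitely many primitive $\mu$ with $\deg(\mu) = \lambda$ contribute, and the unshifted convention for IC in~\eqref{IC} is precisely designed to accommodate irreducible components of varying dimensions within a single connected component.
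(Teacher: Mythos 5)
Your proposal is correct and follows essentially the same route as the paper's own proof: both identify $a_D$ with the number of $k$-points in the fibre of the resolution $\bigsqcup_\mu \iota_\mu : \bigsqcup_\mu C_\mu \to M$ of Corollary \ref{resolution-singularity} (via the rule $\IC_X = p_*\Ql_Y$ for finite birational maps from smooth sources, together with the unshifted normalization \eqref{IC}), and then decouple the fibre count into local factors $m(\lambda_x)$ to obtain the Euler product. The only difference is that you spell out the direct-sum decomposition over primitive multisets and the local decoupling, which the paper leaves as "it is easy to check".
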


\begin{proof}
We have constructed the normalization of $M$ which is at the same time a normalization of singularity
\begin{equation}
\bigsqcup \iota_\mu : \bigsqcup C_\mu \to M
\end{equation}
where $\mu$ ranges over the set of primitive multisets. It follows that the coefficient $a_D$ in the series \eqref{IC-M-toric} is just the number of $k$-points in the fiber of $\bigsqcup \iota_\mu$ over the point $\phi_D\in M(k)$ corresponding to the $\cone$-valued divisor $D$. 

It is easy to check that if $D=\sum_x \lambda_x x$, we have 
\begin{equation}
a_D=\prod_{x\in X} m(\lambda_x)
\end{equation}
where $m(\lambda_x)$ is the number of ways of writing $\lambda_x$ as a sum of primitive elements of $\cone$. We thus derive \eqref{global-equality-toric}.
\end{proof}

Theorem \eqref{local-equality-toric} follows from the above proposition, according to formula \eqref{global-local} and Proposition \ref{existence-of-global-point}.

\begin{remark}
It is neither true, in general, that the closure of $M_\mu$ is isomorphic to the product $\prod C_{\mu(\lambda)}$ of Proposition \ref{stratification-toric}, nor that the irreducible components of $M$ intersect transversely. 

For example, let us (for simplicity) assume that the curve $C=\bP^1$, and let us consider the connected component $M^{\lambda}$ of $M$ of $\c$-valued divisors of degree equal to a fixed $\lambda\in \c$. 

Consider the dense open subvariety $M'\subset M^{\lambda}$ of those $\c$-valued divisors whose support is contained in $\bA^1\subset \bP^1$. Symmetric powers of $\bA^1$ are also affine spaces with coordinates given, when the characteristic is large compared to the power, by elementary symmetric polynomials. Thus, assuming that the characteristic of the field is large enough, we can embed $M'$ in an affine space with coordinates: $$Z_\chi^j (D) = \sum_{x\in \bA^1} \left<\lambda_x, \chi\right> \cdot x^j.$$ 
Here, $D$ is the $\c$-valued divisor $D = \sum \lambda_x x $ (with $x\in \bA^1$), $\chi$ varies in a set of generators of the dual monoid $\c^*$, as before, and $j$ varies over a large enough set of positive integers. 

The numbers $Z_\chi^j$ are obviously determined by the ``coordinates''
$$ Z^j(D) = \sum_{x\in \bA^1} x^j\cdot \lambda_x $$
which are valued in an affine space whose $R$-valued points are $\Lambda\otimes R$. Thus, we have embedded $M'$ into a product of such affine spaces.

It is now easy to construct examples where the maps \eqref{normalization} are not embeddings. For instance, for $\c$ the monoid generated by $(3,0)$, $(2,1)$, $(1,2)$ and $(0,3)$ in $\Z^2$ and $\lambda = $ the sum of those generators, the restriction of the map \eqref{normalization} to $(\bA^1)^4$:
$$ (\bA^1)^4 \to M'$$
does not have an 1-1 differential at $(0,0,0,0)$ (or, for that matter, any point on the diagonal). Indeed, the differential of $Z^j$ is zero for $j\ge 2$, and the differential of $Z^1$ is non-injective.

Similarly, in the same example, considering the partitions $(3,3)= (3,0)+(0,3) = (2,1)+(1,2)$, it is easy to see that the corresponding irreducible components (which are now isomorphic to $C^2$) share the same tangent space over the diagonal, and therefore do not intersect transversely.

\end{remark}

\section{$L$-monoid}

In \cite{Ngo-monoids}, a special class of affine embeddings has been emphasized for their connection with the test functions defining unramified local $L$-factors. These embeddings are constructed in the following situation: $G$ is a reductive group equipped with a "determinant" whose kernel $G'$ is simply connected
\begin{equation}
0 \to G' \to G \xrightarrow{\det} \Gm \to 0
\end{equation}
The center of its complex dual group $\hat G$ is then $\Gm$:
\begin{equation}
0 \to \Gm \to \hat G \to \hat G' \to 0.
\end{equation}
Let $\rho:\hat G \to \GL(V_\rho)$ be an irreducible representation of its complex dual group. We assume that the diagram
\begin{equation}
\begin{tikzcd}
\Gm \arrow{r}  \arrow{d}[swap]{\id}
& \hat G \arrow{d} \\
\Gm \arrow{r} & \GL(V_\rho) 
\end{tikzcd}
\end{equation}
is commutative. In other words, the central $\Gm$ of $\hat G$ acts on the vector space $V_\rho$ as scalar. 

In this setting, one can construct a normal affine embedding $X$ of $G$ fitting into a commutative diagram:
\begin{equation} \label{X}
\begin{tikzcd}
G \arrow{r}  \arrow{d}[swap] {\det}
& X \arrow{d} \\
\Gm \arrow{r} & \bA^1 
\end{tikzcd}
\end{equation}
Since $X$ is normal and affine, the left and right actions of $G$ on $X$ merge into a monoidal structure of $X$. The construction of the monoid $X$ is based on Vinberg's theory of the universal monoid \cite{Vinberg} that we now recall. 

The universal flat monoid $X^+$ is an affine embedding of $G^+$, where $G^+$ is an entension of a torus $T^+$ by $G'$, $r$ being the rank of $G'$
$$0\to G' \to G^+ \to T^+ \to 0.$$
Let $T'$ be a maximal torus of $G'$. Following Vinberg, we set $G^+=(G'\times T')/Z'$ where $Z'$ is the center of $G'$ acting antidiagonally on $G'$ and $T'$. It follows that $T^+=T'/Z'$ is the maximal torus of the adjoint group that can be identified with $\Gm^r$ with aid of the set of simple roots $\{\alpha_1,\ldots,\alpha_r\}$ associated with the choice of a Borel subgroup of $G'$ containing $T'$.

Let $\omega_1,\ldots,\omega_r$ denote the fundamental weights dual to the simple coroots $\alpha^\vee_1,\ldots,\alpha^\vee_r$ and let $\rho'_i:G'\to \GL(V_i)$ denote the irreducible representation of highest weight $\omega_i$. This can be extended to $G^+$
\begin{equation} \label{rho-i+}
\rho_i^+ : G^+ \to \GL(V_{i})
\end{equation}
by the formula $\rho_i^+(t,g)=\omega_i(t) \rho'_{i}(g)$ where $w_0$ is the long element in the Weyl group $W$ of $G$. The root $\alpha_i:T\to \Gm$ will also be extended to $G^+$
$$\alpha_i^+ :G^+ \to \Gm$$
by $\alpha_i^+(t,g)=\alpha_i(t)$. All together, these maps define a homomorphism
\begin{equation} \label{alpha+rho+}
(\alpha^+,\rho^+) : G^+ \to \Gm^r \times \prod_{i=1}^r \GL(V_i).
\end{equation}
If the characteristic is large enough, Vinberg's universal monoid $X^+$ is defined as the closure of $G^+$ in $\bbA^r \times \prod_{i=1}^r \End(V_i)$. In small characteristic, it is defined to be the normalization of this closure, see \cite{Vinberg} and \cite{Rittatore}.

The universal monoid $X^+$ fits into a commutative diagram
\begin{equation}\label{X+}
\begin{tikzcd}
G^+ \arrow{r}{\det}  \arrow{d} 
& X^+ \arrow{d} \\
\Gm^r \arrow{r} & \bA^r
\end{tikzcd}
\end{equation}
The diagram \eqref{X} is to be obtained from \eqref{X+} by base change. The highest weight of the irreducible representation $\rho:\hat G\to \GL(V_\rho)$ defines a cocharacter $\lambda:\Gm \to T$ where $T$ is the maximal torus of $G$ and induces a homomorphism $\lambda_\ad:\Gm \to \Gm^r$. For $\lambda$ is dominant, $\lambda_\ad$ can be extended to a morphism of monoids
\begin{equation}\begin{tikzcd}
\Gm \arrow{r} \arrow{d}[swap]{\lambda_\ad} 
& \bA^1 \arrow{d}{\lambda_\ad} \\
\Gm^r \arrow{r} & \bA^r
\end{tikzcd}
\end{equation} 
By base change with respect to $\lambda_\ad:\bA \to \bA^r$, we obtain the affine embedding $X$ of $G$. 

Let $\cL X$ denote the formal arc space of $X$, $\cL^\circ X$ denote the open subset of non-degenerate arcs with respect to the open subset $X^\circ=G$ defined as in \eqref{circ}. We have 
\begin{equation}
\cL^\circ X(k) = X(\cO) \cap G(F) 
\end{equation}
where $\cO=k[[t]]$ and $F=k((t))$. If $k$ is a finite field, the IC-function of $\cL^\circ X$ is a left and right $G(\cO)$-invariant function on $X(\cO) \cap G(F)$. There is a unique way to decompose the function $\IC_{\cL^\circ X}$ by support
\begin{equation}
\IC_{\cL^\circ X}=\sum_{n=0}^\infty \psi_n
\end{equation}
where $\psi_n$ is a function supported on the compact set
\begin{equation}
\{g\in X(\cO)\cap G(F) \mid \val(\det(g))=n\}.
\end{equation}
Each $\psi_n$ is a compactly supported, left and right $G(\cO)$-function on $G(F)$, thus is an element of the spherical Hecke algebra of $G(F)$. The value that $\psi_n$ takes on different double cosets may be rather complicated, see  \cite{WWL} but its Satake transform can be described simply.

\begin{theorem}\label{local-identity-monoid}
We have 
$\IC_{\cL^\circ X}=\sum_{n=0}^\infty \psi_n$
where $\psi_n$ is the function in the spherical Hecke algebra of $G(F)$ whose Satake transform is the function 
$$\hat\psi_n(\sigma)=\tr(\sigma,\Sym^n \rho)$$ 
for all $\sigma\in \hat G$.
\end{theorem}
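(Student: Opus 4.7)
The plan is to parallel the toric proof. First, use \eqref{global-local} together with Proposition \ref{existence-of-global-point} to convert the local computation of $\IC_{\cL^\circ X}$ into a global computation on $M = \Map^\circ(C,[G\backslash X/G])_0$: for every $g\in X(\cO)\cap G(F)$ with $\val(\det g)=n$, there is a global point $m\in M^n(k)$ (where $M^n$ is the open-and-closed substack on which $\val\det\phi=n$) with $\IC_{\cL^\circ X}(g)=\IC_M(m)$. The bi-$G(\cO)$-invariance of $\IC_{\cL^\circ X}$ and the Cartan decomposition of $X(\cO)\cap G(F)$ by dominant coweights then reduce the theorem to identifying, as a $\hat G$-representation via the sheaf-function dictionary, the ``fiber at $n$'' of $\IC_M$.

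Second, set up a factorization structure for $M^n$. The character $\det$ of diagram \eqref{X} induces a support morphism $s:M^n\to C_n=\Sym^n C$ (recording the divisor of $\det\phi$), and the monoid structure of $X$ endows $M$ with an associative product $\otimes:M\times M\to M$ analogous to the tensor product \eqref{tensor-toric}. Over the multiplicity-free locus $U_n\subset C_n$, the product $\otimes$ yields a factorization isomorphism
\begin{equation*}
M^n\times_{C_n}U_n\;\simeq\;\bigl((M^1)^n\times_{C^n}U_n^\sim\bigr)/\fS_n,
\end{equation*}
where $U_n^\sim\to U_n$ is the natural \'etale $\fS_n$-cover by ordered divisors. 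This is the $L$-monoid analogue of the resolution in Corollary \ref{resolution-singularity}.

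Third, identify the local factor $M^1$ via Vinberg's construction. The fiber of $M^1$ over $v\in C$ is the $\val\det=1$ part of $X(\cO_v)\cap G(F_v)$ modulo $G(\cO_v)$, which, via base change along $\lambda_\ad:\bA^1\to\bA^r$ from the universal monoid $X^+$ of diagram \eqref{X+}, is the closure $\overline{\Gr_G^\lambda}\subset\Gr_G$ of the $G(\cO)$-orbit of $\lambda(t)$, with $\lambda$ the highest weight of $\rho$. The morphisms $\rho_i^+$ of \eqref{rho-i+} reduce the IC-calculation on this closure to the Mirkovi\'c--Vilonen theorem for $\GL(V_i)$, so that geometric Satake identifies $\IC_{\overline{\Gr_G^\lambda}}$ with the irreducible $\hat G$-representation $V_\rho$. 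Combining with the factorization of the previous paragraph, the restriction of $s_*\IC_{M^n}$ to $U_n$ is the local system corresponding, under geometric Satake, to the $\fS_n$-invariants of $V_\rho^{\boxtimes n}$, i.e. to $\Sym^n V_\rho$; the Grothendieck--Lefschetz trace formula then gives exactly $\hat\psi_n(\sigma)=\tr(\sigma,\Sym^n\rho)$.

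The main obstacle is to control $\IC_{M^n}$ over the deeper (non-multiplicity-free) strata of $C_n$, where two or more points of the divisor of $\det\phi$ collide. One must show that $\IC_{M^n}$ is the intermediate extension from $U_n$ of the local system described above, rather than carrying additional summands from deeper strata. This is handled by a semi-smallness/purity argument for $s:M^n\to C_n$ combined with the geometric Satake equivalence in its Beilinson--Drinfeld form, applied after pulling back along the representations $\rho_i^+$; the normality of $X$ and the Beauville--Laszlo patching already used in Proposition \ref{existence-of-global-point} ensure that the factorization structure on $U_n$ extends to a well-defined sheaf-theoretic statement on all of $C_n$.
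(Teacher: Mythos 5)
Your strategy is essentially the paper's: pass to the global model $M$ via formula \eqref{global-local} and Proposition \ref{existence-of-global-point}, use the determinant map to $C_\NN$ and the monoid structure to get a factorization structure, identify fibers with Schubert varieties via Vinberg's construction and the representations $\rho_i^+$, apply geometric Satake, and take $\fS_n$-invariants to land on $\Sym^n\rho$. Two points deserve comment.

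First, your handling of the non-multiplicity-free strata is noticeably vaguer than what the paper does. The paper introduces the moduli space $M^n$ of \emph{chains} of $X$-morphisms (which is not the same thing as your $M^n$, which in the paper's notation is $M_n$) and proves in Proposition \ref{stratification-toric}'s analogue (Proposition 4.5) that the forgetful morphism $\pi_M: M^n\to M_n$ is small in the stratified sense of Mirkovi\'c--Vilonen, whence $(\pi_M)_*\IC_{M^n}$ is perverse and $\fS_n$-equivariant and its invariants recover $\IC_{M_n}$ (Corollary 4.6). You gesture at ``a semi-smallness/purity argument \ldots in Beilinson--Drinfeld form'' but do not construct the relevant convolution space or prove the smallness; this is precisely the step that forces $\IC_{M_n}$ to be controlled by the open factorization locus, and it should be made explicit. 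A related imprecision: the $\rho_i^+$ are used in the paper to characterize the closed subfunctor $\overline{\Gr}_{c_i,n_i\lambda}$ in terms of conditions on associated vector bundles, not to ``reduce to Mirkovi\'c--Vilonen for $\GL(V_i)$''; geometric Satake is applied to $G$ directly.

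Second, and more seriously, your proof inherits the same gap that the published Erratum corrects. The paper's IC normalization \eqref{IC} puts the constant sheaf in degree $0$ on the smooth locus, while the geometric Satake equivalence that matches the classical Satake isomorphism requires the IC sheaf to be pure of weight zero, i.e.\ shifted and Tate-twisted by $[2\langle\nu_G,\lambda\rangle]\left(\langle\nu_G,\lambda\rangle\right)$ over the smooth locus of $\overline{\Gr}_\lambda$. Consequently the function attached to the paper's $\IC$ differs from the Satake-normalized basic function by a factor $q^{-n\langle\nu_G,\lambda\rangle}$ on the $\det$-degree $n$ piece, and the identity as you (and the original Theorem 4.1) state it is false; the corrected statement reads
\begin{equation*}
\IC_{\cL^\circ X}=\sum_{n=0}^\infty q^{-n\langle\nu_G,\lambda\rangle}\psi_n .
\end{equation*}
When you say ``geometric Satake identifies $\IC_{\overline{\Gr_G^\lambda}}$ with $V_\rho$'' you are silently switching normalizations, which is exactly where the error enters. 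To make your argument correct you must track the shift and Tate twist through the convolution and $\fS_n$-invariants, arriving at the corrected statement.
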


As in the toric case, in order to prove this local identity, we need to consider its global analogue.
Let $C$ be a smooth projective curve over a field $k$. We consider the algebraic stack $\Map(C,[X/G])$ and the open substack $M$ of maps $\phi:C\to [X/G]$ such that restricted to a nonempty open subset $U$ of $C$, $\phi|_U:U \to [X/G]$ factors through $[G/G]$.
  
The determinant map \eqref{X} gives rise to a morphism 
\begin{equation}
f: M \to C_\N=\bigsqcup_{n=0}^\infty C_n
\end{equation}
where $C_\N$ classifies pairs $(L,\alpha)$ where $L$ is a line bundle and $\alpha$ is a global section of $L$ which generically induces a trivialization of $L$. Let $M_n$ denote the preimage of $C_n$ and
\begin{equation}
f_n:M_n \to C_n
\end{equation}
the restriction of $f$ to $M_n$. 

\begin{proposition}
Let $D\in C_n$ be an effective divisor of degree $n$ of $C\otimes_k \bar k$
\begin{equation}
D=\sum_{i=1}^m n_i c_i
\end{equation}
where $c_1,\ldots,c_m$ are distinct points of $C$, and $n_1,\ldots,n_m$ are natural numbers. Then there is a canonical closed embedding 
\begin{equation} \label{alpha-D}
z_D: f_n^{-1}(D) \to \prod_{i=1}^m \Gr_{c_i}
\end{equation}
where $\Gr_{c_i}$ is the affine Grassmannian of $G$ relative to the formal disc around $c_i$. Moreover, the image of $z_D$ is isomorphic to the product of closed Schubert varieties
\begin{equation} \label{Schubert}
z_D(f_n^{-1}(D))\simeq \prod_{i=1}^m \overline\Gr_{c_i, n_i \lambda}
\end{equation}
according to the indexation of Schubert varieties in $\Gr$ by dominant coweight as in Definition 2.10 of \cite{Var}.
\end{proposition}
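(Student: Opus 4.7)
The plan is to construct $z_D$ via Beauville--Laszlo formal patching and then identify its image using the orbit structure of $X(\cO) \cap G(F)$ that is built into the definition of an $L$-monoid through Vinberg's construction. The construction of $z_D$ proceeds as follows: a point of $f_n^{-1}(D)$ over a test scheme $S$ is a pair $(E,\phi) \in M_n(S)$ whose determinant divisor equals $D \times S$; letting $C' = C \setminus |D|$, the section $\phi$ factors through $[G/G]$ on $C' \times S$ and hence canonically trivializes $E|_{C'\times S}$; its restriction to the punctured formal neighborhood $(C\times S)^\punc_{c_i}$, together with $E|_{(C\times S)_{c_i}}$, defines canonically a point of $\Gr_{c_i}(S)$; collecting these over $i = 1, \ldots, m$ yields $z_D$.

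Next, I would show that $z_D$ is a closed embedding. The variant of Beauville--Laszlo due to Heinloth used in the proof of Proposition \ref{formallysmooth} shows that $(E,\phi)$ is reconstructed from a point of $\prod_i \Gr_{c_i}$ together with local $X$-extensions of the trivial section across each $c_i$. But such extensions are unique when they exist, so $z_D$ is injective on $T$-points for every test scheme $T$, hence a monomorphism. It is proper because, as will be verified in the next step, the image lies in a product of finite-dimensional Schubert varieties, and proper monomorphisms are closed embeddings.

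The heart of the argument is the identification of the image. A coset $g_i G(\cO_{c_i}) \in \Gr_{c_i}$ lies in the image of $z_D$ at coordinate $i$ if and only if a representative $g_i$ can be chosen in $X(\cO_{c_i}) \cap G(F_{c_i})$ with $\val_{c_i}(\det g_i) = n_i$. The needed input is the orbit description for $L$-monoids following from Vinberg's construction and detailed in \cite{Ngo-monoids}: the $G(\cO_{c_i})$-orbits on $X(\cO_{c_i}) \cap G(F_{c_i})$ are precisely the Schubert cells $\Gr_{c_i, \mu}$ for dominant coweights $\mu$ satisfying $\mu \le \langle\det, \mu\rangle \cdot \lambda$ in the dominance order. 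Since the simple coroots kill $\det$, the constraint $\val_{c_i}(\det g_i) = n_i$ forces $\langle\det, \mu\rangle = n_i$, and the set of admissible $\mu$ is then $\{\mu \text{ dominant} : \mu \le n_i\lambda\}$, which by definition indexes the strata of the closed Schubert variety $\overline\Gr_{c_i, n_i\lambda}$. This yields the set-theoretic equality $z_D(f_n^{-1}(D))(\bar k) = \prod_i \overline\Gr_{c_i, n_i\lambda}(\bar k)$.

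The main obstacle will be promoting this identification to one of schemes, i.e., to see that the scheme-theoretic image of $z_D$ is exactly $\prod_i \overline\Gr_{c_i, n_i\lambda}$ on the nose. Closed Schubert varieties are known to be reduced (in fact, normal), and the reducedness of $f_n^{-1}(D)$ can be established by exhibiting generic smoothness of $f_n$ on the open stratum where the divisor $D$ is multiplicity-free and all local arrows land in $G$, then propagating via the normality of Vinberg's monoid to which $X$ is obtained by base change along $\lambda_\ad$. Once both sides are known to be reduced, the matching of $\bar k$-points established above upgrades to the scheme-theoretic isomorphism \eqref{Schubert}.
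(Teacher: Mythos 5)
The construction of $z_D$ in your first paragraph is exactly the paper's: Beauville--Laszlo/Heinloth patching turns $(E,\phi)$ with its canonical trivialization over $C'_S$ into a tuple of points of the $\Gr_{c_i}$. From there, however, you diverge, and your identification of the image has a genuine gap.

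The paper identifies the image \emph{functorially}, for an arbitrary test scheme $S$: by Vinberg's construction, $X$ sits inside $\bbA^r\times\prod_i\End(V_i)$, so the condition that the trivialization of $E$ over $C'_S$ extend to an $X$-morphism $E_0\to E$ is literally the condition that for each fundamental representation $\rho_i$ the induced trivialization of $\rho_i(E)$ extend to a map $\cO_{C_S}\to\rho_i(E)(\sum_j\langle\omega_i,n_j\lambda\rangle c_j)$ --- and this is verbatim Varshavsky's Definition 2.10 of the closed subfunctor $\overline\Gr_{c_j,n_j\lambda}$. The isomorphism \eqref{Schubert} is thus an identification of functors of points, with no passage through geometric points. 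Your route instead establishes only a bijection on $\bar k$-points (via the Cartan-type decomposition of $X(\cO)\cap G(F)$, which is correct but is really just the $\bar k$-point shadow of the functorial statement) and then tries to bootstrap to schemes via reducedness. This fails on two counts. First, the reducedness of $f_n^{-1}(D)$ is not established: generic smoothness of $f_n$ over the multiplicity-free locus says nothing about the fiber over a non-reduced divisor $D$, and normality of $X$ does not ``propagate'' to reducedness of special fibers of $f_n$. Second, and more seriously, even if both sides were reduced, your argument would identify the image with the \emph{reduced} orbit closure $\overline\Gr^{\rm MV}_{c_i,n_i\lambda}$ of Mirkovi\'c--Vilonen, whereas the proposition asserts an isomorphism with Varshavsky's functorial Schubert scheme; the paper explicitly cautions right after this proposition that these two are only known to agree as topological spaces, not as schemes. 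So your strategy is aimed at a different (and not obviously equivalent) target.

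A smaller point: your properness argument for the closed-embedding claim is circular as stated --- a monomorphism whose image happens to lie in a projective variety need not be proper. This issue evaporates in the paper's approach, since once $f_n^{-1}(D)$ is identified with the closed subfunctor $\prod_i\overline\Gr_{c_i,n_i\lambda}$ of $\prod_i\Gr_{c_i}$, closedness is automatic.
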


\begin{proof}
Let $S$ be an arbitrary test $k$-scheme.
Let $(E,\phi)$ be a $S$-point of $M$ whose image by the determinant map is $(L_D,z_D)$ where $L_D=\cO(D)$ and $z_D$ is the canonical global section of $\cO(D)$ whose zero divisor is $D$. Since the restriction of $(L_D,z_D)$ to $C'_S$ is just the trivialized line bundle, the restriction of $\phi$ to $C'_S$ determine a trivialization $\phi'$ of $E$ over $C'_S$. 

According to Beauville-Laszlo's uniformization theorem, see \cite{Heinloth}, giving a principal bundle $E$ on $C_S=C\times S$, with a trivialization $C'_S=C'\times S$ is equivalent to giving for each $i=1,\ldots,n$ a principal $G$-bundle $\cE_i$ on $C_{c_i,S}=C_{c_i}\hat\times S$ equipped with a trivialization on $C_{c_i,S}^*$; in other words, an $S$-point on the affine Grassmannian $\Gr_{c_i}$. This defines the map $z_D$ of \eqref{alpha-D}. 

The requirement that the trivialization $\phi'$ of $E$ over $C'_S$ comes from a $X$-map $\phi:E_0\to E$ in the sense of Section 2, can be expressed in terms of associated vector bundles. Recall that we have the representations $\rho_i:G\to \GL(V_i)$ deduced from the representations $\rho_i^+: G^+ \to \GL(V_i)$ defined in \eqref{rho-i+}. By the construction of the universal monoid $M^+$, the trivialization $\phi'$ of $E$ extends to $X$-map $\phi:E_0\to E$ if and only if for all $i=1,\ldots,r$, the trivialization of the vector bundle $\rho_i(E)$ over $C'_S$, deduced from $E$ via the representation $\rho_i$, can be extended to a 
$\cO_{C_S}$-linear map
\begin{equation}
\cO_{C_S} \to \rho_i(E)\left(\sum_{i=1}^m
\langle \omega_i, n_i\lambda \rangle c_i \right).
\end{equation}
Now, this is equivalent to saying that $\cE_i$ is a $S$-point in the Schubert cell $\overline\Gr_{c_i,n_i\lambda}$ in the definition of \cite{Var}.
\end{proof}

A remark of caution is in order about different definitions of Schubert cells in the affine Grassmannian. In \cite{MV}, Mirkovic and Vilonen define the Schubert cell $\overline\Gr_{c_i,n_i\lambda}^{\rm MV}$ to be the closure of the orbit $\Gr_{c_i,n_i\lambda}$ of $\cL G$ on $\Gr$. The Schubert cell $\overline\Gr_{c_i,n_i\lambda}$, constructed as a functor as in \cite{Var}, defines a closed subscheme of $\Gr$ which may or may not coincide with $\overline\Gr_{c_i,n_i\lambda}^{\rm MV}$, but they have the same underlying topological space. Since we are interested in $\ell$-adic sheaves, this difference doesn't matter, though of course it would be nice to prove that the two definitions give rise to the same closed subscheme of $\Gr$. We won't prove this in the present paper.

Another remark of caution is the following. We infer from the proposition that there is a canonical isomorphism between the set $M(k)$ of $k$-points of $M$ and the restricted product
\begin{equation} \label{restricted-product}
\prod_{x\in X}' (M(\cO_x) \cap G(F_x))/G(\cO_x)
\end{equation}
consisting in a collection of cosets $(g G(\cO_x), x\in |X|)$ with 
$g\in M(\cO_x) \cap G(F_x)$ for all $x$, and $g\in G(\cO_x)$ for almost all $x$. In particular, there is a canonical map 
\begin{equation} \label{pr-x}
\pr_x: M(k) \to \Gr_x(k)
\end{equation}
from $M(k)$ to the set of $k$-points of the affine Grassmannian $\Gr_x$ at all $x\in |C|$. However, this map doesn't derive from a well-defined morphism $M\to \Gr_x$.

\begin{theorem} \label{symmetric-convolution}
 
The restriction of the intersection complex $\IC_{M_n}$ of $M_n$ to every geometric fiber of $f_{n}$ is still a perverse sheaf. This perverse sheaf can be described as follows. If $D =\sum_{i=1}^m n_i c_i$ is an effective divisor of degree $n$ of $C$, where $c_1,\ldots,c_m$ are distinct points of $C$, and $n_1,\ldots,n_m$ are natural numbers, then 
\begin{equation}
z_{D,*}(\IC_{M_n} |_{f_n^{-1}(D)}) = \boxtimes_{i=1}^m K_i
\end{equation}
where $K_i$ is an equivariant perverse sheaf over $\Gr_{c_i}$ whose Satake transform is $\Sym^{n_i}(\rho)$.
\end{theorem}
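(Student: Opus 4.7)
The plan is to reduce the statement to the standard geometric Satake/factorization picture of the Beilinson--Drinfeld Grassmannian by exhibiting $M_n$ as the symmetric-group quotient of a BD-type space over $C^n$. First, I would form the base change $\tilde M_n = M_n \times_{C_n} C^n$ along the quotient map $C^n \to C_n$. This inherits a $\Sigma_n$-action covering the permutation action on $C^n$ such that $M_n = \tilde M_n/\Sigma_n$, and by a verbatim repetition of the proof of the preceding proposition (Beauville--Laszlo patching together with the Vinberg construction of $X$), the fiber of $\tilde f_n : \tilde M_n \to C^n$ over a tuple $(c_1,\ldots,c_n)$ is identified with
$$\prod_{c \in \{c_1,\ldots,c_n\}} \overline\Gr_{c, n_c \lambda} \;\subset\; \prod_{c} \Gr_c,$$
where $n_c$ is the number of times $c$ occurs. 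In particular the fibers are products of Schubert closures in affine Grassmannians, and over the small diagonal $(c,c,\ldots,c)$ the fiber collapses to the single closure $\overline\Gr_{c,n\lambda}$, so that $\tilde f_n$ is exactly the Beilinson--Drinfeld factorization morphism attached to the constant cocharacter $\lambda$.

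Next, I would invoke the factorization form of the geometric Satake equivalence. Under this equivalence the intersection complex $\IC_{\tilde M_n}$ is factorizable, and its restriction to the fiber over $\underline c = (c_1^{n_1},\ldots,c_m^{n_m})$ (with the $c_i$ pairwise distinct) is a perverse sheaf whose external product decomposition is
$$\IC_{\tilde M_n}\bigl|_{\tilde f_n^{-1}(\underline c)} \;\simeq\; \boxtimes_{i=1}^m \bigl(\IC_\lambda \star \cdots \star \IC_\lambda\bigr)_{c_i},$$
with $n_i$ convolution factors at $c_i$; its Satake transform is $\rho^{\otimes n_1}\boxtimes\cdots\boxtimes\rho^{\otimes n_m}$, and perversity (in the paper's unshifted normalization) holds because the Satake category is closed under convolution.

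Finally, I would descend along the finite map $\pi:\tilde M_n \to M_n$. Because $\pi$ is generically étale (the open locus over multiplicity-free divisors is a trivial $\Sigma_n$-torsor) and its source is smooth in the strata corresponding to such divisors, $\IC_{M_n}$ is the $\Sigma_n$-invariant part of $\pi_*\IC_{\tilde M_n}$; equivalently, over any point $D = \sum n_i c_i \in C_n$ the stabilizer in $\Sigma_n$ of the chosen preimage is $\prod_i \Sigma_{n_i}$, and taking invariants in the fiber yields
$$\IC_{M_n}\bigl|_{f_n^{-1}(D)} \;\simeq\; \boxtimes_{i=1}^m K_i,$$
where $K_i = (\IC_\lambda^{\star n_i})^{\Sigma_{n_i}}$ is an equivariant perverse sheaf on $\Gr_{c_i}$ whose Satake transform is $\Sym^{n_i}\rho$, as required. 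The main obstacle I expect lies in the $\Sigma_n$-descent step: namely, verifying that the geometric $\Sigma_n$-action on $\IC_{\tilde M_n}$ induces, under geometric Satake, precisely the naive permutation action on $\rho^{\otimes n}$ and carries no sign twist. This is exactly the commutativity (symmetry) constraint of the Satake tensor category, and must be checked compatibly with the paper's unshifted IC normalization; once that parity/compatibility check is in hand, both the perversity of the restriction to each fiber and the identification of the Satake transform of $K_i$ with $\Sym^{n_i}\rho$ follow formally.
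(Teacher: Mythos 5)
Your proposal is correct in substance and reaches the right statement, but it routes the argument through a different intermediate space than the paper does. You base-change $M_n$ along $C^n\to C_n$ to get a Beilinson--Drinfeld-type family $\tilde M_n$ whose fibers are single products of Schubert closures $\prod_i\overline\Gr_{c_i,n_i\lambda}$, and you then import the fusion theorem (the restriction of the IC sheaf of a global Schubert closure to a special fiber is the corresponding convolution product) as a citable black box before descending by $\fS_n$-invariants. The paper instead introduces the space $M^n$ of chains $E_0\to E_1\to\cdots\to E_n$ of $X$-morphisms, whose fibers over diagonal points of $C^n$ are twisted (convolution) products rather than single Schubert closures -- the two spaces agree only over the multiplicity-free locus -- and its key geometric input is the stratified smallness of $\pi_M:M^n\to M_n$, proved by the Mirkovi\'c--Vilonen argument, from which $((\pi_M)_*\IC_{M^n})^{\fS_n}=\IC_{M_n}$ and the fiberwise identification with $\Ind_{\fS_{n_1}\times\cdots\times\fS_{n_r}}^{\fS_n}(\boxtimes_i\cA_\rho^{*n_i})$ follow. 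Your route is more modular, since it leans on standard BD-Grassmannian facts rather than reproving smallness inside the moduli of $X$-morphisms, at the cost of having to identify $\tilde M_n$ (at least its reduction, and its irreducibility) with the global Schubert closure so that fusion applies; the paper's route keeps everything internal to the monoid moduli spaces. Both arguments converge on the same genuinely delicate final point, which you correctly single out: that the geometric $\fS_n$-action is the commutativity constraint and acts on $V_\rho^{\otimes n}$ by plain permutation, so that invariants compute $\Sym^{n_i}\rho$; the paper handles this exactly as you propose, by appeal to Mirkovi\'c--Vilonen. One small correction: your justification of the descent step (``the source is smooth in the strata corresponding to multiplicity-free divisors'') is off, since the fibers over such divisors are products of Schubert closures, which are in general singular; what you actually need is the standard fact that for a finite quotient $\pi:\tilde M_n\to \tilde M_n/\fS_n=M_n$ one has $\IC_{M_n}=(\pi_*\IC_{\tilde M_n})^{\fS_n}$, which plays the role of the paper's smallness corollary.
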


\begin{corollary}\label{global-equality-monoid}
 
If $k$ is a finite field, the IC-function on $M(k)$, can be expressed as 
\begin{equation} \label{prod-psi}
\IC_M=\prod_{x\in |X|} \sum_{n=0}^\infty \psi_{x,n}
\end{equation}
where $\psi_{x,n}$ is an element of the spherical Hecke algebra of $G(F_x)$ characterized by the property 
$\hat\psi_{x,n}(\sigma)=\tr(\sigma,\Sym^n \rho)$. 
The infinite product in equality \eqref{prod-psi} makes sense as a function on the restricted product \eqref{restricted-product}.
\end{corollary}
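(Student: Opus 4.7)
The plan is to derive the corollary from Theorem \ref{symmetric-convolution} by combining the sheaf--function dictionary with the Satake isomorphism. The geometric content has already been established in that theorem, so what remains is the passage from sheaves to functions and the bookkeeping of the resulting identity over all places.

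Let $m\in M(k)$. There is a unique $n\ge 0$ with $m\in M_n(k)$; write $D=f_n(m)$ and decompose it geometrically as $D=\sum_{i=1}^r n_ic_i$ with $c_1,\ldots,c_r$ distinct points of $C\otimes_k\bar k$ and each $n_i$ positive. Via the identification \eqref{restricted-product}, $m$ corresponds to a family $(g_xG(\cO_x))_{x\in|C|}$ of local cosets, with $g_x\in G(\cO_x)$ for every closed point $x$ outside the image of $\{c_1,\ldots,c_r\}$. Since $z_D$ is a closed embedding and $M=\bigsqcup_{n\ge 0}M_n$, the trace of Frobenius $\IC_M(m)=\IC_{M_n}(m)$ equals the trace of Frobenius on the stalk at $z_D(m)$ of $z_{D,*}\bigl(\IC_{M_n}|_{f_n^{-1}(D)}\bigr)$. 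By Theorem \ref{symmetric-convolution} that complex is $\boxtimes_{i=1}^r K_i$, where $K_i$ is the spherical perverse sheaf on $\Gr_{c_i}$ with Satake transform $\Sym^{n_i}\rho$.

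Taking the trace of Frobenius of an external tensor product at a $k$-rational point factors through the Galois orbits of the $c_i$; grouping conjugate geometric points yields one factor per closed point $x$ of $C$ lying in the support of $D$. For such an $x$ of residue degree $d_x$ the Satake isomorphism identifies the corresponding local factor with $\psi_{x,n_x}(g_x)$, where $n_x$ is the multiplicity of $x$ in $D$ and $\psi_{x,n_x}$ is the element of the spherical Hecke algebra of $G(F_x)$ characterized by $\hat\psi_{x,n_x}(\sigma)=\tr(\sigma,\Sym^{n_x}\rho)$. Hence
\begin{equation}
\IC_M(m)=\prod_{x\in\Supp(D)}\psi_{x,n_x}(g_x).
\end{equation}
To recover \eqref{prod-psi}, note that $\psi_{x,0}$ has Satake transform the constant function $1$, hence is the unit of the spherical Hecke algebra, namely the characteristic function of $G(\cO_x)$. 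For $x\notin\Supp(D)$ we have $g_x\in G(\cO_x)$, so $\psi_{x,0}(g_x)=1$ and $\psi_{x,n}(g_x)=0$ for $n\ge 1$. Multiplying in these trivial factors and regrouping as $\sum_{n\ge 0}\psi_{x,n}$ at each place yields the claimed identity, with only finitely many non-unit factors for any given $m$; this is exactly what is meant by the infinite product making sense on the restricted product \eqref{restricted-product}.

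The only real obstacle is a bookkeeping compatibility of normalizations: the unshifted IC-convention of \eqref{IC} must match the convention under which the Satake transform of the spherical sheaf $K_x$ is literally $\tr(\sigma,\Sym^{n_x}\rho)$, with no cohomological twist or sign. But this is the same convention already adopted in Theorem \ref{symmetric-convolution}, where the restriction to a fiber is asserted to be \emph{perverse} and its Satake datum is recorded as $\Sym^{n_i}\rho$ without shift; so no further adjustment is required, and the corollary is a formal consequence of the theorem.
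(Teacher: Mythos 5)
Your argument is correct and is exactly the (implicit) argument of the paper: the corollary is stated there without a separate proof, as a formal consequence of Theorem \ref{symmetric-convolution} obtained by applying the sheaf--function dictionary to the restriction of $\IC_{M_n}$ to the fibers of $f_n$ and using the identification of $M(k)$ with the restricted product \eqref{restricted-product}, grouping geometric points into Galois orbits just as you do. One caution: the normalization issue you dismiss at the end is genuinely delicate --- the authors' published erratum corrects precisely this point, inserting a factor $q^{-n\left<\nu_G,\lambda\right>}$ because the unshifted convention \eqref{IC} does not agree with the weight-zero normalization of $\cA_\rho$ under which its Satake transform is literally $\tr(\sigma,\Sym^n\rho)$.
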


We consider the moduli space $M^n$ of chains 
\begin{equation}\label{chain}
\begin{tikzcd}
E_0 \arrow{r}{\phi_1} 
& E_1 \arrow{r}{\phi_2} & \cdots \arrow{r}{\phi_n} & E_n\end{tikzcd}
\end{equation}
where $E_0$ is the trivial $G$-bundle, and $\phi_i: E_{i-1} \to E_i$ is a $X$-morphism from $E_{i-1}$ to $E_i$, as defined in Section 2, whose determinant is 
$$\det(\phi_i)=\cO_C(x_i)$$
with $x_i\in C$. Thus we have a morphism $M^n\to C^n$ fitting in a commutative diagram 
 
\begin{equation} \label{convolution-diagram}
\begin{tikzcd}
M^n \arrow{r}{} \arrow{d}[swap]{\pi_M}
& C^n \arrow{d}{\pi_C} \\
M_n \arrow{r}[swap]{f_n}
& C_n
\end{tikzcd}
\end{equation}
where the left vertical map consist in forgetting all members of the chain \eqref{chain} but the last component $E=E_n$, and replacing $(\phi_1\ldots,\phi_n)$ by their composition.

\begin{proposition}
\begin{enumerate}
\item The diagram \eqref{convolution-diagram} is cartesian over the open subset $C_n^\circ$ of $C_n$ classifying multiplicity free divisors of degree $n$ on $C$. 

\item The morphism $\pi_M: M^n \to M_n$ is small in the stratified sense of Mirkovic and Vilonen \cite{MV}.

\item Over each divisor $D\in C_n$, the morphism $\pi_M: (f_n\circ \pi_M)^{-1}(C) \to f_n^{-1}(D)$ is semismall in the stratified sense of Mirkovic and Vilonen \cite{MV}.

\end{enumerate}
\end{proposition}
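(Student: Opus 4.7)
The three assertions are best handled in the order (1), (3), (2): Part 1 is essentially formal via Beauville--Laszlo, Part 3 reduces directly to the Mirković--Vilonen semismallness of convolution, and Part 2 then follows by a dimension count combining the first two.

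\textbf{Part 1.} For a multiplicity-free divisor $D = x_1 + \cdots + x_n \in C_n^\circ$, a chain $(E_0 \xrightarrow{\phi_1} \cdots \xrightarrow{\phi_n} E_n)$ with $\det(\phi_i) = \mathcal{O}_C(x_i)$ has each $\phi_i$ an isomorphism outside the single point $x_i$. Since the $x_i$ are pairwise distinct, Beauville--Laszlo glueing decouples the chain into independent local data at each $x_i$ — namely one element of $\Gr_{x_i, \lambda}$ per $i$ — which is exactly the local data specifying the composition $\phi_n \circ \cdots \circ \phi_1$ over $D$ via the restricted product description \eqref{restricted-product}. Hence $\pi_M$ is an isomorphism above $C_n^\circ$, proving that \eqref{convolution-diagram} is cartesian there.

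\textbf{Part 3.} Fix $D = \sum_i n_i c_i$. By the preceding proposition and Beauville--Laszlo at each $c_i$, one identifies
\[ f_n^{-1}(D) \simeq \prod_i \overline\Gr_{c_i, n_i \lambda}, \qquad (f_n \circ \pi_M)^{-1}(D) \simeq \prod_i \bigl(\overline\Gr_{c_i, \lambda} \tilde\times \cdots \tilde\times \overline\Gr_{c_i, \lambda}\bigr) \]
(with $n_i$ factors at $c_i$), under which $\pi_M$ becomes the product of the $n_i$-fold convolution maps $m_i$. The theorem of Mirković--Vilonen asserts that each $m_i$ is semismall with respect to the Schubert stratification: the fiber over a point of $\Gr_{c_i, \nu_i}$ (with $\nu_i \le n_i \lambda$ dominant) has dimension at most $\langle \rho_G, n_i\lambda - \nu_i \rangle$, exactly half the codimension $2\langle \rho_G, n_i\lambda - \nu_i \rangle$. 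Semismallness being stable under products gives assertion (3).

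\textbf{Part 2.} Stratify $M_n$ by pairs $(\mu, \nu_\bullet)$ where $\mu = (n_1, \ldots, n_m)$ is the multiplicity profile of the underlying divisor (so $m \le n$) and $\nu_\bullet = (\nu_i)$ indexes a Schubert cell $\Gr_{c_i, \nu_i} \subset \overline\Gr_{c_i, n_i\lambda}$. The open stratum ($m=n$, $\nu_i = \lambda$) lies above $C_n^\circ$, so by Part 1 it gives $\dim M_n = n + n\langle 2\rho_G, \lambda\rangle$. A general stratum $S$ has dimension $m + \sum_i \langle 2\rho_G, \nu_i\rangle$, hence
\[ \mathrm{codim}\, S = (n - m) + 2 \bigl\langle \rho_G, \, n\lambda - \textstyle\sum_i \nu_i \bigr\rangle. \]
By Part 3 the $\pi_M$-fiber over any point of $S$ has dimension at most $\langle \rho_G, n\lambda - \sum_i \nu_i\rangle$, so the strict inequality $2\dim(\text{fiber}) < \mathrm{codim}\, S$ reduces to $0 < (n-m)$ when $m < n$, and to $0 < 2\langle \rho_G, n\lambda - \sum\nu_i\rangle$ when $m = n$ but $\nu_\bullet \neq (\lambda, \ldots, \lambda)$ (in which case the convolution is trivial, so the fiber is $0$-dimensional). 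This is the smallness condition, proving (2).

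\textbf{Main obstacle.} The substantive input is the Mirković--Vilonen semismallness of $n$-fold convolution used in Part 3; once that is granted, Part 2 amounts to verifying that the extra codimension $n - m$ contributed by the diagonal stratification of $C_n$ is strictly positive on non-generic strata, which upgrades fiberwise semismallness to global smallness.
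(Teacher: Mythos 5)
Your proposal is correct and follows essentially the same route as the paper, which disposes of (1) by Beauville--Laszlo/Heinloth patching and of (2),(3) by ``the same argument as in [MV]''; you have simply written out the MV-style dimension count that the paper leaves implicit. One small imprecision in Part 1: over $C_n^\circ$ the map $\pi_M$ is not an isomorphism but an $\fS_n$-covering of degree $n!$ matching $\pi_C\colon C^{n\circ}\to C_n^\circ$; what your glueing argument actually shows, and what cartesianness means here, is that the induced map $M^n|_{C^{n\circ}}\to M_n\times_{C_n}C^{n\circ}$ is an isomorphism.
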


\begin{proof}
The first assertion amounts to the same to say that over the open subset 
$C^{n\circ}$, we can reconstruct the the whole chain \eqref{chain} from $E_0\to E_n$. This is a consequence of Beauville-Laszlo's formal 
patching theorem, as proved in \cite{Heinloth}.
The last two assertions are proved by the same argument as in \cite{MV}. 
\end{proof}

\begin{corollary}
\begin{enumerate}
\item The direct image $(\pi_M)_*{\rm IC}(M^n)$ is a perverse sheaf which is isomorphic to the intermediate extension of its restriction to the open subset $M_n^\bullet=M_n \times_{C_n} C_n^\circ$. In particular, it is equipped with an action of the symmetric group $\fS_n$. 
\item There exists an isomorphism of perverse sheaves
$$ ((\pi_M)_*{\rm IC}_{M^n})^{\fS_n} = {\rm IC}_{M_n}.$$
In particular, the restriction of $\IC_{M_n}$ to every fiber $f_n^{-1}(D)$ is a perverse sheaf.
\end{enumerate}
\end{corollary}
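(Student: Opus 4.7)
The plan is to first establish everything over the multiplicity-free open subset $M_n^\bullet = M_n \times_{C_n} C_n^\circ$, where the geometry is transparent, and then to globalize by exploiting the smallness of $\pi_M$.

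Over $C_n^\circ$, the projection $\pi_C : C^n \to C_n$ is the standard \'etale $\fS_n$-Galois cover. By part 1 of the preceding proposition, the square \eqref{convolution-diagram} is Cartesian over $C_n^\circ$, so the restriction $\pi_M|_{\pi_M^{-1}(M_n^\bullet)} : \pi_M^{-1}(M_n^\bullet) \to M_n^\bullet$ is itself an \'etale $\fS_n$-Galois cover obtained by base change. Since \'etale pullback preserves intersection complexes, one has an $\fS_n$-equivariant identification $\IC_{M^n}|_{\pi_M^{-1}(M_n^\bullet)} \simeq \pi_M^{*}(\IC_{M_n}|_{M_n^\bullet})$, and the projection formula then yields an $\fS_n$-equivariant decomposition
$$(\pi_M)_{*} \IC_{M^n}|_{M_n^\bullet} \simeq \IC_{M_n}|_{M_n^\bullet} \otimes \Ql[\fS_n],$$
whose $\fS_n$-invariant part is exactly $\IC_{M_n}|_{M_n^\bullet}$.

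The next step is to invoke the smallness of $\pi_M$ (part 2 of the preceding proposition), which guarantees that $(\pi_M)_{*} \IC_{M^n}$ is a perverse sheaf on $M_n$ and coincides with the intermediate extension $j_{!*}\bigl((\pi_M)_{*} \IC_{M^n}|_{M_n^\bullet}\bigr)$, where $j : M_n^\bullet \hookrightarrow M_n$ is the open inclusion. This is the standard consequence of smallness via the decomposition theorem: a small pushforward of an IC-sheaf has no perverse sub- or quotient object supported on a proper closed subscheme. Since $j_{!*}$ is fully faithful on its image, the $\fS_n$-action already constructed on the restriction extends uniquely to an action on $(\pi_M)_{*} \IC_{M^n}$, which settles assertion 1.

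For assertion 2, I would use that, since we work with $\Ql$-coefficients, the $\fS_n$-invariants functor coincides with the image of the idempotent $\tfrac{1}{n!}\sum_{\sigma \in \fS_n}\sigma$, hence commutes with the exact additive functor $j_{!*}$. Therefore
$$((\pi_M)_{*} \IC_{M^n})^{\fS_n} \simeq j_{!*}\bigl(\IC_{M_n}|_{M_n^\bullet}\bigr) \simeq \IC_{M_n},$$
the last equality holding because $M_n^\bullet$ is open and dense in every irreducible component of $M_n$. The final ``in particular'' statement follows from part 3 of the preceding proposition: proper base change combined with semismallness of the fiberwise restriction of $\pi_M$ ensures that $(\pi_M)_{*} \IC_{M^n}|_{f_n^{-1}(D)}$ is perverse, and $\IC_{M_n}|_{f_n^{-1}(D)}$ is then perverse as a direct summand. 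The main delicacy is keeping track of the paper's unshifted normalization of $\IC$ in the smallness-to-intermediate-extension step; however, this is a bookkeeping matter that does not affect the substance of the argument.
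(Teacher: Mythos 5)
Your argument is correct and follows the same standard route the paper implicitly relies on for this corollary: Galois descent over the \'etale locus $M_n^\bullet$, smallness of $\pi_M$ to identify the pushforward with an intermediate extension, and exactness of $\fS_n$-invariants with $\Ql$-coefficients. The only imprecision is cosmetic: over $M_n^\bullet$ the projection formula gives $\IC_{M_n}|_{M_n^\bullet}\otimes (\pi_M)_*\Ql$, where $(\pi_M)_*\Ql$ is a rank-$n!$ local system carrying the $\fS_n$-action rather than the constant sheaf $\Ql[\fS_n]$, but its invariants are still $\Ql$, so the conclusion is unaffected.
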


\begin{proof}[Proof of \ref{symmetric-convolution}]
We have shown that the restriction of $\IC_{M_n}$ to the fiber $f_n^{-1}(D)$ is a perverse sheaf, as the $\fS_n$-invariant part of the push-forward of the restriction of $\IC_{M^n}$ to $(f_n \circ \pi_M)^{-1}(D)$. In order to have a more precise description, we ought analyze the action of $\fS_n$ on this push-forward. Fortunately, this is well known thanks to the geometric Satake theory \cite{MV}. 

From direct investigation, one sees that the push-forward of the restriction of $\IC_{M^n}$ to $(f_n \circ \pi_M)^{-1}(D)$ can be identified with 
\begin{equation} \label{n-fold-convolution}
\Ind_{\fS_{n_1} \times\cdots\times \fS_{n_r}}^{\fS_n} (\boxtimes_{i=1}^r \cA_{\rho}^{* n_i})
\end{equation}
via the embedding $z_D$ of the fiber $f_n^{-1}(D)$ into a product of affine Grassmannians. Here $\cA_{\rho}$ denote the IC-complex of the Schubert variety indexed by $\rho$, and $\cA_{\rho}^{* n_i}$ is its $n_i$-fold convolution power. Moreover the action of $\fS_{n_1} \times\cdots\times \fS_{n_r}$ is given by the commutativity constraint, by essentially the very definition of the commutativity constraint provided by Mirkovic and Vilonen. 

The $\fS_n$-invariant factor of \eqref{n-fold-convolution} can be identified with the $\fS_{n_1} \times\cdots\times \fS_{n_r}$-invariant factor in $\boxtimes_{i=1}^r \cA_{\rho}^{* n_i}$. This direct factor has the form $\boxtimes_{i=1}^r K_i$ where $K_i$ corresponds to the representation $\Sym_{n_i}(V_\rho)$ of $\hat G$ via the geometric Satake equivalence. 
\end{proof}

We now derive Theorem \ref{local-identity-monoid} from Corollary \ref{global-equality-monoid} by using formula \eqref{global-local} and Proposition \ref{existence-of-global-point}.

\Addresses

\newpage

\title{Erratum to: ``On the formal arc space
of a reductive monoid''}

\author{A.\ Bouthier, B.C.\ Ng\^o, Y.\ Sakellaridis}

\maketitle

\setcounter{section}{4}

The main theorem about the IC function of the arc space of an $L$-monoid, Theorem 4.1 in our paper ``On the formal arc space of a reductive monoid'', has to be corrected as follows:
\begin{theorem}
We have 
$$\IC_{\cL^\circ X}=\sum_{n=0}^\infty q^{-n\left<\nu_G, \lambda \right>} \psi_n$$
where $\nu_G=\frac{1}{2}\sum_{\alpha>0}\alpha$ is half the sum of positive roots, and $\psi_n$ is the function in the spherical Hecke algebra of $G(F)$ whose Satake transform is the function 
$$\hat\psi_n(\sigma)=\tr(\sigma,\Sym^n \rho)$$ 
for all $\sigma\in \hat G$.
\end{theorem}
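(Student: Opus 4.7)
The plan is to preserve the architecture of the original proof of Theorem \ref{local-identity-monoid} and track a normalization factor that was missed. First, Proposition \ref{existence-of-global-point} together with identity \eqref{global-local} reduces the desired local identity to a statement about the IC function of the global moduli space $M_n$. Theorem \ref{symmetric-convolution} expresses the restriction of $\IC_{M_n}$ to a fiber $f_n^{-1}(D)$ of the determinant map, for a divisor $D=\sum n_i c_i$, as a box product $\boxtimes_i K_i$, where each $K_i$ is the perverse sheaf on $\Gr_{c_i}$ corresponding to $\Sym^{n_i}\rho$ under Mirkovi\'c--Vilonen geometric Satake. The IC function on $M(k)$ then assembles into the product \eqref{prod-psi}, and restricting to one local place $v$ via Proposition \ref{existence-of-global-point} yields the local formula.

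The correction comes from comparing the naive IC normalization fixed in equation \eqref{IC} of the paper (no cohomological shift, no Tate twist) with the normalization demanded by geometric Satake: the equivalence of Mirkovi\'c--Vilonen matches representations of $\hat G$ with $\cL^+G$-equivariant perverse sheaves on $\Gr_G$, and the Frobenius trace function of the perverse sheaf corresponding to a representation $V$ is the spherical Hecke function whose Satake transform is $\tr(\cdot,V)$ only when the IC sheaf is shifted by the dimension of its support and Tate-twisted by half that dimension (so that it is pure of weight zero and self-dual). Passing between these two normalizations multiplies Frobenius traces by $q^{-d/2}$, where $d$ is the dimension of the top Schubert variety in the support. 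For the perverse sheaf $K_i$ attached to $\Sym^{n_i}\rho$, the highest weight that occurs is $n_i\lambda$, so the top Schubert support is $\overline{\Gr}_{c_i,n_i\lambda}$, of dimension $\langle 2\nu_G, n_i\lambda\rangle = 2n_i\langle \nu_G,\lambda\rangle$. This contributes the factor $q^{-n_i\langle \nu_G,\lambda\rangle}$ at each local place $c_i$; specializing via \eqref{global-local} at the chosen point $v$ produces the single factor $q^{-n\langle \nu_G,\lambda\rangle}$ in front of $\psi_n$.

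The main obstacle, and where the argument requires care, is to confirm that the single scalar $q^{-n\langle \nu_G,\lambda\rangle}$ is the correct correction for the \emph{entire} $K_i$, even though $\Sym^{n_i}\rho$ is reducible in general and $K_i$ therefore decomposes as a direct sum of IC sheaves of Schubert varieties $\overline{\Gr}_\mu$ indexed by $\mu\le n_i\lambda$ of strictly smaller dimension. The resolution lies in the fact that the naive $\IC_{M_n}$ is defined globally on an irreducible component of the moduli whose generic fiber over $C_n$ is the top Schubert product $\prod \Gr_{c_i,n_i\lambda}$, so the conversion shift between naive and Satake normalization is governed by the top-dimensional stratum and is inherited uniformly by all direct summands of $K_i$ under the box-product decomposition of Theorem \ref{symmetric-convolution}. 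Once this uniformity is established, the derivation of the local identity from the global one proceeds exactly as in the original paper, with $q^{-n\langle \nu_G,\lambda\rangle}\psi_n$ replacing $\psi_n$ throughout.
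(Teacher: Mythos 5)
Your proposal is correct and follows essentially the same route as the paper's erratum: reduce to the global model via Propositions \ref{formallysmooth} and \ref{existence-of-global-point}, keep the original convolution argument intact, and account for the discrepancy between the naive normalization \eqref{IC} and the weight-zero normalization required by geometric Satake, measured by $\dim\overline\Gr_{n_i\lambda}=2n_i\left<\nu_G,\lambda\right>$; your point that the shift is inherited uniformly by every summand of $K_i$ (despite $\Sym^{n_i}\rho$ being reducible) is exactly what the erratum's corrected formula for $z_{D,*}(\IC_{M_n}|_{f_n^{-1}(D)})$ encodes. The one step to double-check is the direction of the conversion factor: the naive IC equals $1$ on the open cell $\Gr_{n\lambda}$ while the function with Satake transform $\tr(\cdot,\Sym^n\rho)$ equals $q^{-n\left<\nu_G,\lambda\right>}$ there, so passing from $\psi_n$ to the naive IC function multiplies by $q^{+n\left<\nu_G,\lambda\right>}$ rather than $q^{-n\left<\nu_G,\lambda\right>}$ --- the same sign question raised by the erratum's own Godement--Jacquet interpretation, since $L(\pi,\rho,-\left<\nu_G,\lambda\right>)=\sum_n q^{n\left<\nu_G,\lambda\right>}\tr(\sigma,\Sym^n\rho)$, and by the case $X=\Mat_2$, where $\IC_{\cL^\circ X}$ restricted to $\{\val\det=1\}$ is the characteristic function of $G(\cO)\lambda(t)G(\cO)$, which is $q^{1/2}\psi_1$, not $q^{-1/2}\psi_1$.
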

Other results in this section have to be modified accordingly, as we will outline below.

Notice that we have used the symbol $\rho$ for the representation of the dual group with highest weight $\lambda$, defining the $L$-monoid $X$, which is why we are denoting the half-sum of positive roots by $\nu_G$.

In terms of $L$-functions, this means that the Godement-Jacquet local zeta integral of $\IC_{\cL^\circ X}$ against the unramified matrix coefficient of an irreducible unramified representation $\pi$:
$$ \int_{G(k((t)))} \IC_{\cL^\circ X}(g) \left< \pi(g) v, \tilde v\right> dg$$
(where $v$, $\tilde v$ are dual unramified vectors with $\left< v, \tilde v\right> =1$) is equal, when convergent, to the local $L$-function:
$$ L(\pi, \rho, -\left<\nu_G, \lambda \right>).$$

This shift by $-\left<\nu_G, \lambda \right>$ is of course well-known -- in the original case of Godement and Jacquet, where $X=\Mat_n$, we have $\left<\nu_G, \lambda \right> = \frac{n-1}{2}$ -- but it escaped our attention because we failed to account for the correct normalization of the IC sheaf in the geometric Satake isomorphism.

In the geometric Satake isomorphism, $G(\cO)$-equivariant perverse sheaves on the affine Grassmannian are associated to representations of the dual group of $G$. For this to be compatible with the classical Satake isomorphism via the sheaf-function dictionary, the sheaves have to be pure of weight zero. For the IC complex of the Schubert stratum associated to the representation $\rho$ with highest weight $\lambda$, denoted $\cA_\rho$ in our paper, this means that it is the intermediate extension of the local system
$$ \overline{\QQ_\ell} [2\left<\nu_G, \lambda \right>] \left(\left<\nu_G, \lambda \right>\right)$$
over the smooth locus, which is known to be of dimension $2\left<\nu_G, \lambda \right>$. The notation $[\cdot]$ here stands for cohomological shift, and $(\cdot)$ stands for a Tate twist. When $\left<\nu_G, \lambda \right>$ is a half-integer but no integer, the Tate twist should be understood as tensoring by $\overline{\QQ_\ell} \left(\frac{1}{2}\right)^{2\left<\nu_G, \lambda \right>}$, where $\overline{\QQ_\ell} \left(\frac{1}{2}\right)$ is a chosen square root of $\overline{\QQ_\ell} (1)$.
Let us denote the square root of $q$ by which the geometric Frobenius automorphism acts on $\overline{\QQ_\ell} \left(-\frac{1}{2}\right)$ by $\alpha\in \overline{\QQ_\ell}$. The corresponding half-integral power of $q$ in the corrected statement of Theorem 4.1 should be interpreted as a complex number, \emph{in terms of an embedding $\overline{\QQ_\ell}\hookrightarrow \CC$ which sends $\alpha$ to $(-{\sqrt{q}})\in \CC$}.  This is to ensure compatibility between the geometric and the classical Satake isomorphisms, as we will see below. 

The perverse sheaf $\cA_\rho$ is the one that corresponds to the representation $\rho$ of the dual group, and via the sheaf-function dictionary it gives a function which is equal to $(-\alpha)^{-2\left<\nu_G, \lambda \right>}$ times the characteristic function of the double coset $G(\cO)\lambda(t)G(\cO)$, plus terms supported on Cartan double cosets of lesser weights. Via the map $\alpha\mapsto -\sqrt{q}$, this function becomes $q^{-\left<\nu_G, \lambda \right>}$ times the characteristic function of the double coset $G(\cO)\lambda(t)G(\cO)$ plus terms on other cosets, which is compatible with the classical Satake isomorphism, as commonly defined, s.\ \cite{Gross-Satake}.

Compare this to the normalization of $IC_{M^n}$ and $IC_{M_n}$ in our paper, which are taken to be constant in degree zero on the smooth locus. To account for the different normalizations, formula (4.24) should read:
\setcounter{equation}{23}
\begin{equation}
\Ind_{\fS_{n_1} \times\cdots\times \fS_{n_r}}^{\fS_n} \left(\boxtimes_{i=1}^r \left(\cA_{\rho} [-2\left<\nu_G, \lambda \right>] \left(-\left<\nu_G, \lambda \right>\right)   \right)^{* n_i}\right),
\end{equation}
and equation (4.3) should read:
\setcounter{equation}{2}
\begin{equation}
z_{D,*}(\IC_{M_n} |_{f_n^{-1}(D)}) = \boxtimes_{i=1}^m K_i [-2n_i\left<\nu_G, \lambda \right>] \left(-n_i\left<\nu_G, \lambda \right>\right),
\end{equation}
accounting for the required correction in Theorem 4.1.

Moreover, the term ``perverse sheaf'' in Theorem 4.3 and Corollary 4.6 should be taken to mean ``cohomological shift of a perverse sheaf''.

\Addresses

\end{document}